\documentclass[reqno,11pt]{amsart}
\usepackage[T1]{fontenc}
\usepackage{lmodern}
\usepackage{amsmath,amssymb,amsfonts,amsthm,amscd,amsxtra,mathrsfs}
\usepackage{indentfirst,mathtools,thmtools}
\usepackage{tikz,tikz-3dplot,subcaption,xcolor}
\usetikzlibrary{arrows.meta}
\usepackage[top=3.5cm,bottom=4.0cm,right=2.5cm,left=2.5cm]{geometry}
\usepackage[title]{appendix}
\usepackage[hyphens]{url}
\usepackage{hyperref}
\usepackage{bookmark}
\allowdisplaybreaks
\mathtoolsset{showonlyrefs=true}
\numberwithin{equation}{section}
\title{\textbf{Multi-marginal optimal transport on sub-Riemannian manifolds}}

\author{Haowei Lin}
\address{Graduate School of Science, the University of Osaka, Osaka 560-0043, Japan}
\email{u466248a@ecs.osaka-u.ac.jp}
\date{\today}
\subjclass[2020]{53C17, 49Q22, 53C23}
\keywords{Sub-Riemannian manifolds, multi-marginal optimal transport,
Wasserstein barycenters, optimal control.}
\theoremstyle{plain}
\newtheorem{theorem}{Theorem}[section]

\newtheorem{proposition}[theorem]{Proposition}
\newtheorem{corollary}[theorem]{Corollary}

\theoremstyle{definition}
\newtheorem{definition}[theorem]{Definition}
\newtheorem{example}[theorem]{Example}

\newtheorem{remark}[theorem]{Remark}

\newcommand \supp {\operatorname{supp}}

\newcommand \m {\mathfrak{m}}

\newcommand \Cut {\mathrm{Cut}}

\newcommand{\R}{\mathbb{R}}
\newcommand{\Pc}{\mathcal{P}_{c}}
\newcommand{\Pac}{\mathcal{P}^{\mathrm{ac}}_{c}}
\newcommand{\Ann}{\operatorname{Ann}}

\begin{document}
\bibliographystyle{plain}
\nocite{*}

\begin{abstract}
In this paper, we study multi-marginal optimal transport on ideal sub-Riemannian manifolds. We establish a sub-Riemannian barycentric balance condition and prove the existence of a solution to the Monge problem under an additional assumption. We also investigate the absolute continuity of Wasserstein barycenters. Our results extend and improve upon those of Brendan Pass, Andrea Pinamonti, and Mattia Vedovato \cite{PPV21} on the Heisenberg group, answering two open questions raised in their work.
\end{abstract}
\maketitle
\thispagestyle{empty}
\pagenumbering{Roman}
\setcounter{page}{1}
\tableofcontents
\setcounter{page}{1}
\pagenumbering{arabic}
\tdplotsetmaincoords{70}{120}

\section{Introduction}\label{sec:introduction}

Let $(X,d,\mathfrak{m})$ be a metric measure space, where $\mathfrak{m}$ is a Borel measure on $X$. We denote by $\mathcal{P}(X)$ the space of Borel probability measures on $X$, and by $\mathcal{P}_c(X)$ the subset consisting of compactly supported probability measures. Moreover, $\mathcal{P}^{\mathrm{ac}}_c(X)$ denotes the set of compactly supported probability measures that are absolutely continuous with respect to $\mathfrak{m}$.

Given $m$ probability measures $\mu_1,\mu_2,\ldots,\mu_m$ on $X$, consider the following multi-marginal optimal transport problem (MMOT): find a probability measure $\gamma\in\mathcal{P}(X^m)$ satisfying
\begin{equation}
    (\pi_i)_\sharp\gamma=\mu_i,
    \qquad i=1,\ldots,m,
\end{equation}
which minimizes
\begin{equation}\label{kp}\tag{KP}
    \int_{X^m}c(x_1,\ldots,x_m)
    \,d\gamma(x_1,\ldots,x_m),
\end{equation}
where $c:X^m\to\mathbb{R}$ is a given cost function. In this paper, we are mainly interested in the barycentric cost
\begin{equation}\label{eq:barycentric-cost}
    c(x_1,\ldots,x_m)
    :=
    \inf_{y\in X}\sum_{i=1}^m d^2(x_i,y).
\end{equation}
Any point $y\in X$ attaining this minimum is called a \emph{barycenter} of $x_1,\ldots,x_m$.

Under mild assumptions on the cost function, the Kantorovich problem \eqref{kp} admits an optimal plan; see \cite{KP15,Villani09}. A further question is whether an optimal plan is supported on a graph. This leads to the Monge problem, which seeks a measurable $(m-1)$-tuple of maps $F=(F_2,\ldots,F_m)$, with $(F_i)_\sharp\mu_1=\mu_i$ for $i=2,\ldots,m$, minimizing
\begin{equation}\label{mp}\tag{MP}
  \int_X c(x_1,F_2(x_1),\ldots,F_m(x_1))\,d\mu_1(x_1).
\end{equation}

The Monge problem \eqref{mp} has been studied in a variety of geometric settings. Existence and uniqueness were first established in Euclidean space \cite{GS98} and subsequently extended to Riemannian manifolds \cite{KP15}. In the nonsmooth setting, building on Ohta's work on barycenters in Alexandrov spaces \cite{Ohta12}, Jiang \cite{Jia17} established existence and uniqueness for the corresponding Monge problem. From a different perspective, Han, Liu and Zhu \cite{HLZ24} studied MMOT in RCD spaces.

Extending this theory to sub-Riemannian manifolds presents additional difficulties. Pass, Pinamonti, and Vedovato \cite{PPV21} established existence and uniqueness for the Monge problem on the Heisenberg group under two additional technical assumptions. The first requires that each marginal $\mu_i$ assigns mass $0$ to the set of barycenters of points in the supports of the marginals, which ensures that the map sending optimal tuples to barycenters is injective almost everywhere. This injectivity holds in the Riemannian setting but is no longer automatic on the Heisenberg group, where the squared distance fails to be semiconcave on the diagonal. The second requires all marginals to be absolutely continuous with respect to Lebesgue measure, whereas the classical Euclidean and Riemannian results require absolute continuity of only one marginal.

In this paper, we study barycentric MMOT on ideal sub-Riemannian manifolds, where semiconcavity of the squared distance on the diagonal is likewise unavailable in general. To investigate how this affects the correspondence between barycenters and optimal tuples, we use the Lagrange multiplier rule and other tools from control theory to characterize barycenters. Using this characterization, we obtain a solution to \eqref{mp} under the same assumption (Condition A; see Definition~\ref{condA}) as in \cite[Corollary~29]{PPV21}, without assuming that all marginals are absolutely continuous with respect to $\m$. We also study the absolute continuity of Wasserstein barycenters in the spirit of Kim and Pass \cite{KP17}.
We first prove this when all marginals are absolutely continuous.
We then extend the result to the case where only one marginal
(for convenience, we take the first one) is absolutely continuous.
For the remaining marginals, we impose Condition B
(Definition~\ref{condAA}), which requires their singular mass
at static points to be concentrated on an at most countable set. This condition is substantially weaker than both Condition~A
and the requirement that all marginals be absolutely continuous.
Based on these results, we obtain a second existence result for \eqref{mp}.

This article is organized as follows. Sections~\ref{sec:sr} and~\ref{sec:mmot} review the necessary background on sub-Riemannian geometry and multi-marginal optimal transport. Section~\ref{sec:structure} establishes a characterization of barycenters (Theorem~\ref{thm:barycenter-rigidity}) and gives a sufficient condition for the existence of a Monge solution (Theorem~\ref{mppp}). We also present counterexamples to injectivity of the barycenter map. Section~\ref{sec:ac} establishes the absolute continuity of Wasserstein barycenters (Theorem~\ref{thm:ac-barycenter-draft} and Theorem~\ref{54}) and uses this result to prove existence and uniqueness for the Monge problem (Theorem~\ref{thm:monge-final}). We end this article by introducing some further questions in Section~\ref{further-discussion}.

\section{Sub-Riemannian geometry}\label{sec:sr}

Let $M$ be a connected $n$-dimensional smooth manifold, and let
$\mathcal D\subset TM$ be a smooth distribution generated by
global smooth vector fields $X_1,\ldots,X_k$.
We equip $\mathcal D$ with the metric $g$ defined by
\begin{equation}
    g_x(v,v)
    =
    \min\left\{
      \sum_{i=1}^k u_i^2:
      u\in\mathbb R^k,\quad
      v=\sum_{i=1}^k u_iX_i(x)
    \right\},
    \quad v\in\mathcal D_x.
\end{equation}
We additionally assume that $\mathcal D$ is bracket-generating; namely, at each
$x\in M$, the vectors $X_1(x),\ldots,X_k(x)$ and their iterated
Lie brackets evaluated at $x$ span $T_xM$.
The step at $x$ is the smallest bracket length needed to span $T_xM$.
We also assume that $\mathcal D$ has step $s$ at every point.
Then $(M,\mathcal D,g)$ is called a
\emph{sub-Riemannian manifold}. We refer to \cite{ABB20} for the standard background, and to \cite{Rif14}
for sub-Riemannian optimal transport theory.

An \emph{admissible curve} $\gamma\colon[0,1]\to M$ is an
absolutely continuous path for which there exists a control
$u=(u^1,\ldots,u^k)\in L^2([0,1];\R^k)$ satisfying
\[
  \dot\gamma(t)=\sum_{i=1}^k u^i(t)X_i(\gamma(t))
  \qquad\text{for a.e. }t\in[0,1].
\]
Its \emph{length} is given by
\[
  L(\gamma)=\int_0^1 g(\dot\gamma(t),\dot\gamma(t))^{1/2}\,dt.
\]
The \emph{sub-Riemannian distance} is then defined by
\[
  d(x,y)
  :=\inf\bigl\{L(\gamma):\gamma(0)=x,\ \gamma(1)=y,
  \ \gamma\text{ is admissible}\bigr\}.
\]
The following theorem ensures that this distance is well defined.
\begin{proposition}[Chow--Rashevskii; {\cite{ABB20}}]
Let $(M,\mathcal D,g)$ be a sub-Riemannian manifold. Then
$d\colon M\times M\to\R$ is finite and continuous.
\end{proposition}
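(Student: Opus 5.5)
The claim to prove is the Chow--Rashevskii statement: $d$ is finite on $M\times M$ and continuous. The plan has two parts. First, show that the set of points reachable from a fixed $x$ by admissible curves (the attainable set) is open. Second, prove the local estimate $d(x,y)\le C\,|y-x|^{1/s}$ in coordinates near $x$. Connectedness of $M$ then gives finiteness. The local estimate, together with the triangle inequality, gives continuity.

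\emph{Openness of attainable sets.} Fix $x$ and use a Sussmann/Krener style end-point argument. Pick the largest $j$ for which there are fields $Y_1,\dots,Y_j\in\{\pm X_i\}$ and times $t^0\in(0,\varepsilon)^j$ such that the composition-of-flows map
\[
\Phi(t_1,\dots,t_j)=e^{t_jY_j}\circ\cdots\circ e^{t_1Y_1}(x)
\]
has rank $j$ at $t^0$. This concatenation of flows is an admissible curve with piecewise constant control of length $\sum t_i$. Suppose $j<n$. Then the image of $\Phi$ near $t^0$ is a $j$-dimensional submanifold $N$. By maximality of $j$, every $X_i$ is tangent to $N$ near $\Phi(t^0)$; otherwise we could add one more flow and raise the rank. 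Hence every iterated bracket of the $X_i$ is also tangent to $N$, which contradicts the bracket-generating hypothesis. So $j=n$, and $\Phi$ is a local diffeomorphism onto an open set. Composing with the reversed flows $e^{-t^0_iY_i}$ shows that a neighbourhood of $x$ itself is reached by admissible curves of length at most $C\varepsilon$. In particular the equivalence classes of the relation ``joinable by an admissible curve'' are open. They partition the connected $M$, so there is only one class, and $d<\infty$.

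\emph{Continuity.} The previous step already gives $d(x,y)\to0$ as $y\to x$, since any $y$ near $x$ is reached with length $\le C\varepsilon$ for arbitrary $\varepsilon$. For continuity of $d$ at $x$ it suffices to note that the construction is uniform in $x$ on compact sets: with $t^0$ fixed, the rank condition persists for base points near $x$ by smooth dependence on initial data. I would make this quantitative using the standard bracket-flow estimate
\[
e^{-tY}e^{-tZ}e^{tY}e^{tZ}(x)=x+t^2[Y,Z](x)+o(t^2),
\]
together with its iterates up to length $s$. This yields $d(x,y)\le C|y-x|^{1/s}$ uniformly on compact sets. Combined with
\[
|d(x,y)-d(x',y')|\le d(x,x')+d(y,y'),
\]
this gives continuity of $d$ in both arguments.

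The main obstacle is this uniformity step, making the local estimate hold uniformly rather than pointwise. The equiregularity assumption (step $s$ everywhere) is exactly what lets the same family of brackets and constants work on a neighbourhood. In a paper of this kind, one would simply cite \cite{ABB20} for these details.
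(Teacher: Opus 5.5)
Your argument is correct and is essentially the standard proof in \cite{ABB20} that the paper cites. The maximal-rank composition of flows, the tangency contradiction with bracket generation, and composition with the reversed flows show that every ball $B(x,\varepsilon)$ contains a manifold neighbourhood of $x$, which gives finiteness by connectedness.

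The uniform H\"older estimate you call the main obstacle is not needed. Neither equiregularity nor the constant-step hypothesis (which are different conditions) plays any role. The pointwise fact $d(x,x')\to0$ as $x'\to x$ from your first step, together with symmetry of $d$ and $|d(x,y)-d(x',y')|\le d(x,x')+d(y,y')$, already gives joint continuity.
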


A \emph{geodesic} is an admissible curve that locally minimizes
the length between its endpoints.
We call a curve attaining the minimum in the definition of
$d(x,y)$ a \emph{minimizing geodesic}. Moreover, it is well known that a constant-speed admissible curve $\gamma$ on $[0,1]$ minimizes length between its endpoints if and only if it minimizes the energy
\[
  J(\gamma)=\frac12\int_0^1 g(\dot\gamma(t),\dot\gamma(t))\,dt.
\]
A control $u\in L^2([0,1];\mathbb{R}^k)$ associated with the above curve $\gamma$ is called minimal if $\|u\|_{L^2}^2=2J(\gamma)$. Such a control exists and is unique; see the discussion following \cite[Definition~3.8]{ABB20}.
Throughout this article, all geodesics are parametrized
at constant speed, and, unless otherwise stated, their associated
controls are taken to be minimal. In this setting, the energy
can also be regarded as a functional of the control, and we write
$J(u):=\frac12\|u\|_{L^2}^2$.
\subsection{Geodesics}
We introduce two characterizations of minimizing geodesics. The first is given by the Pontryagin maximum principle. 

For $i=1,\ldots,k$, set $h_i(\lambda):=\langle\lambda,X_i\rangle$, where $\lambda\in T^*M$.
The sub-Riemannian Hamiltonian is
\begin{equation}
\begin{gathered}
H\colon T^*M\to\mathbb{R},\\
H(\lambda)
:=\frac12\sum_{i=1}^k h_i(\lambda)^2
=\frac12\sum_{i=1}^k\langle\lambda,X_i\rangle^2,
\qquad \lambda\in T^*M.
\end{gathered}
\end{equation}
For a smooth function $f$ on $T^*M$, the Hamiltonian vector field $\vec f$ is defined by
$\omega(\cdot\,,\vec f)=d f,$
where $\omega$ is the canonical symplectic form on $T^*M$. The Pontryagin maximum principle gives a necessary condition for geodesics.

\begin{theorem}[Pontryagin maximum principle; see {\cite[Theorem~4.20]{ABB20}}]
Let $(M,\mathcal D,g)$ be a sub-Riemannian manifold and let $\gamma_u$ be a
minimizing geodesic with control $u$. Then there exists a Lipschitz extremal lift
$\lambda\colon[0,1]\to T^*M$ such that $\pi(\lambda(t))=\gamma_u(t)$ and
\[
  \dot\lambda(t)=\sum_{i=1}^k u^i(t)\vec h_i(\lambda(t)),\qquad\text{for a.e. }t\in[0,1].
\]
Moreover, one of the following conditions holds:
\begin{itemize}
  \item[(1)] $u^i(t)=h_i(\lambda(t))$ for $i=1,\ldots,k$ and a.e.\ $t$;
consequently, $\dot\lambda(t)=\vec H(\lambda(t))$.
  \item[(2)] $\langle \lambda(t),X_i(\gamma_u(t))\rangle=0$ for $i=1,\ldots,k$ and all $t\in[0,1]$; in this case $\lambda\neq 0$.
\end{itemize}
\end{theorem}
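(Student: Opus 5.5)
The plan is to derive the statement from the first-order optimality condition for the endpoint map, combined with a Lagrange multiplier argument, in the standard way.

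\textbf{Setup.} Fix $x=\gamma_u(0)$ and consider the endpoint map
\[
E_x\colon L^2([0,1];\R^k)\to M,\qquad E_x(v)=\gamma_v(1),
\]
which is smooth (Fr\'echet) on its open domain. Since $\gamma_u$ is a constant-speed minimizing geodesic with minimal control, $u$ minimizes $J$ on the fibre $E_x^{-1}(\gamma_u(1))$. Thus $u$ is a constrained critical point of $J$ on a level set of $E_x$.

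\textbf{Multiplier.} Apply the Lagrange multiplier rule in its finite-codimension form, the target $M$ being finite dimensional. This gives a covector $\lambda_1\in T^*_{\gamma_u(1)}M$ and a scalar $\nu\in\{0,1\}$, not both zero, such that
\[
\lambda_1\circ D_uE_x=\nu\, d_uJ .
\]
The rule is justified as follows. If $(D_uE_x,d_uJ)\colon L^2\to T_{\gamma_u(1)}M\times\R$ were surjective, then the open mapping / implicit function theorem would produce controls reaching $\gamma_u(1)$ with strictly smaller energy, which is a contradiction. Hence its image is a proper subspace, and we may choose $(\lambda_1,-\nu)$ annihilating that image.

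\textbf{Extremal lift.} Define $\lambda(t):=(P_{t,1})^*\lambda_1$, where $P_{t,s}$ is the flow of the time-dependent field $\sum_i u^i(t)X_i$. Then $\lambda$ is Lipschitz, projects to $\gamma_u$, and satisfies $\dot\lambda=\sum_i u^i\vec h_i(\lambda)$ for a.e.\ $t$. This follows because pulling covectors back along a flow is exactly the Hamiltonian lift of that flow.

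\textbf{Computing the differential.} Use the variation formula
\[
D_uE_x(v)=\int_0^1 (P_{t,1})_*\sum_i v^i(t)X_i(\gamma_u(t))\,dt .
\]
Pairing with $\lambda_1$ gives
\[
\lambda_1\bigl(D_uE_x v\bigr)=\int_0^1\sum_i v^i(t)\,h_i(\lambda(t))\,dt .
\]
Since $d_uJ(v)=\int_0^1 \langle u,v\rangle\,dt$ and $v\in L^2$ is arbitrary, we obtain
\[
h_i(\lambda(t))=\nu\, u^i(t)\quad\text{for a.e. } t .
\]

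\textbf{The two cases.}
\begin{itemize}
\item If $\nu=1$, then $u^i=h_i(\lambda)$, so $\dot\lambda=\sum_i h_i\vec h_i=\vec H(\lambda)$. This is case~(1).
\item If $\nu=0$, then $\lambda_1\neq0$, so $\lambda(t)\neq0$ for all $t$, since $P_{t,1}$ is a diffeomorphism. Moreover $h_i(\lambda(t))=0$ for a.e.\ $t$, and by continuity of $t\mapsto\lambda(t)$ this holds for all $t$. This is case~(2).
\end{itemize}

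\textbf{Main obstacle.} The delicate step is the rigorous derivation of the differential of the endpoint map and the Lagrange/open-mapping argument in $L^2$. This needs smooth dependence of solutions of the control ODE on $L^2$ controls, and care that the flows exist on $[0,1]$ near $u$, for instance by assuming completeness of the $X_i$ or localizing to a compact neighbourhood. Everything else is bookkeeping of the adjoint equation.
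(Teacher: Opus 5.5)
Your proposal is correct. It is essentially the standard proof of \cite[Theorem~4.20]{ABB20}, which the paper cites without reproducing: a Lagrange multiplier obtained from the non-surjectivity of $(D_uE_x,d_uJ)$, the cotangent lift $\lambda(t)=(P_{t,1})^*\lambda_1$, and the variation formula for $D_uE_x$, which together give $h_i(\lambda(t))=\nu\,u^i(t)$.

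One small detail is missing. With a merely $L^2$ control, $\lambda$ would only be absolutely continuous. The Lipschitz claim holds because the minimal control of a constant-speed geodesic satisfies $|u(t)|=L(\gamma_u)$ for a.e.\ $t$, so $u\in L^\infty$ and $|\dot\lambda|$ is bounded along the compact curve $\lambda([0,1])$.
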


Lifts $\lambda(t)$ satisfying \textup{(1)} are called \emph{normal extremals}, and lifts satisfying \textup{(2)} are called \emph{abnormal extremals}. Their projections are called \emph{normal geodesics} and \emph{abnormal geodesics}, respectively. A normal geodesic admitting no abnormal lift is called \emph{strictly normal}.
The theorem above enables us to define the exponential map on a complete sub-Riemannian manifold, which describes normal geodesics:
\begin{equation}
  \exp_x(\lambda)=
  \begin{cases}
    \pi\bigl(e^{\vec H}(\lambda)\bigr),
      & \lambda\in T_x^*M\setminus\Ann(\mathcal D_x),\\
    x,
      & \lambda\in\Ann(\mathcal D_x),
  \end{cases}
\end{equation}
where $e^{t\vec H}$ denotes the Hamiltonian flow, and the annihilator of the distribution $\mathcal D$ is defined by
\begin{equation}
  \Ann(\mathcal D)
  :=\{p\in T^*M:p(v)=0
      \text{ for every }v\in\mathcal D_{\pi(p)}\}.
\end{equation}

The second characterization uses the Lagrange multiplier rule.
\begin{definition}[Endpoint map]
  Let $(M,\mathcal{D},g)$ be a sub-Riemannian manifold and fix a point $x\in M$.
  Let $\mathcal{U}_x\subset L^2([0,1];\mathbb{R}^k)$ be the open set of all control functions for which the corresponding curve $\gamma_u(t)$ starting at $x$ is defined on $[0,1]$. We define the \emph{endpoint map} based at $x$ by
  \begin{equation}
    E_x:\mathcal{U}_x\to M,\qquad E_x(u)=\gamma_u(1).
  \end{equation}
\end{definition}

\begin{theorem}[Lagrange multiplier rule; {\cite[Theorem~B.2]{Rif14}}]
Let $(M,\mathcal D,g)$ be a sub-Riemannian manifold, and let $\gamma_u$ be a
minimizing geodesic from $x$ to $y$. Then there exists
$(\eta,\lambda_0)\ne0$, with
$\eta\in T_y^*M$ and $\lambda_0\in\{1,0\}$, such that
\[
  \eta^*D_uE_x=\lambda_0D_uJ.
\]
When $\lambda_0=1$, the geodesic is normal, with $x=\exp_y(-\eta)$; when $\lambda_0=0$, it is
abnormal. 
\end{theorem}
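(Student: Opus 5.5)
The Lagrange multiplier rule is a statement about a constrained minimization problem: $u$ minimizes $J$ on the level set $E_x^{-1}(y)\subset\mathcal U_x$. I would prove it in two stages. First, an abstract Lagrange multiplier rule for a smooth map from a Hilbert space to a finite-dimensional manifold. Second, an identification of the two cases with the normal and abnormal extremals of the Pontryagin maximum principle stated above.

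For the first stage I would use the following facts.
\begin{itemize}
\item $E_x$ is smooth (indeed $C^1$ suffices) on the open set $\mathcal U_x\subset L^2([0,1];\R^k)$, by smooth dependence of solutions of ODEs on $L^2$ controls.
\item $J$ is smooth, with $D_uJ(v)=\langle u,v\rangle_{L^2}$.
\item Since $\gamma_u$ is a minimizing geodesic parametrized at constant speed with minimal control, $u$ minimizes $J$ on $E_x^{-1}(y)$.
\end{itemize}

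Now consider the augmented map $\Phi=(E_x,J):\mathcal U_x\to M\times\R$ near $u$. Suppose its differential $D_u\Phi$ were surjective onto $T_yM\times\R$. Then, since the target is finite-dimensional, the local surjectivity (open mapping / inverse function) theorem, applied to the restriction of $\Phi$ to a finite-dimensional subspace on which $D_u\Phi$ is already onto, shows that $\Phi$ is locally open at $u$. In particular there would exist $u'$ near $u$ with $E_x(u')=y$ and $J(u')<J(u)$, contradicting minimality.

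Hence the image of $D_u\Phi$ is a proper linear subspace of $T_yM\times\R$. It is therefore annihilated by some nonzero covector $(\eta,-\lambda_0)\in T_y^*M\times\R$, which gives
\[
\eta^*D_uE_x=\lambda_0D_uJ .
\]
Rescaling, we may take $\lambda_0\in\{0,1\}$.

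The second stage identifies the two cases with geodesic types. I would compute $D_uE_x$ by linearizing the control system along $\gamma_u$: with $P_{t,1}$ the flow of the time-dependent field $\sum_i u^i(t)X_i$ from time $t$ to $1$,
\[
D_uE_x(v)=\int_0^1 (P_{t,1})_*\sum_i v^i(t)X_i(\gamma_u(t))\,dt .
\]
Define $\lambda(t):=(P_{t,1})^*\eta\in T^*_{\gamma_u(t)}M$. By construction $\lambda$ satisfies $\dot\lambda=\sum_i u^i\vec h_i(\lambda)$. The multiplier identity then reads
\[
\int_0^1\sum_i\langle\lambda(t),X_i\rangle v^i(t)\,dt=\lambda_0\langle u,v\rangle_{L^2}\quad\text{for all }v,
\]
that is, $h_i(\lambda(t))=\lambda_0u^i(t)$ for a.e.\ $t$.
\begin{itemize}
\item If $\lambda_0=1$, this is exactly case (1) of the Pontryagin maximum principle, so $\lambda$ is a normal extremal with $\lambda(1)=\eta$ and $\dot\lambda=\vec H(\lambda)$.
\item If $\lambda_0=0$, then $\eta\neq0$ and $h_i(\lambda(t))\equiv0$, which is case (2), so $\gamma_u$ is abnormal.
\end{itemize}

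For the formula $x=\exp_y(-\eta)$ in the normal case, I would reverse time. The curve $t\mapsto\lambda(1-t)$ has base curve $\gamma_u(1-t)$ starting at $y$. Because $H$ is quadratic in the fibres, the map $\lambda\mapsto-\lambda$ conjugates $\vec H$ to $-\vec H$, so $-\lambda(1-t)$ is an integral curve of $\vec H$ starting at $-\eta$. Projecting at $t=1$ gives $\pi\bigl(e^{\vec H}(-\eta)\bigr)=\gamma_u(0)=x$.

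I expect the main obstacle to be the local-openness argument in infinite dimensions, namely justifying that surjectivity of $D_u\Phi$ forces $\Phi$ to be open near $u$. This is handled by choosing $n+1$ directions $v_0,\dots,v_n$ whose images span $T_yM\times\R$ and applying the finite-dimensional inverse function theorem to $s\mapsto\Phi(u+\sum_j s_jv_j)$. The smoothness of $E_x$ on $L^2$ controls should be quoted from \cite{ABB20} rather than reproved.
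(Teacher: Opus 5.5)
Your argument is correct. The paper gives no proof of this statement and simply imports it from \cite[Theorem~B.2]{Rif14}. Your two stages are the standard argument behind that citation: the augmented map $(E_x,J)$ cannot be locally open at a constrained minimizer, so the image of its differential has a nonzero annihilator; you then read off the extremal from $\lambda(t)=(P_{t,1})^*\eta$ and use time reversal under $\lambda\mapsto-\lambda$. This is also the augmented-constraint strategy the paper itself uses in its appendix proof of Theorem~\ref{thm:lagrange-barycenter}. One detail to add: $\lambda$ is Lipschitz, as the paper's notion of extremal lift requires, because the minimal control of a constant-speed geodesic lies in $L^\infty$.
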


We conclude this subsection with a result that recovers a minimizing geodesic from a differentiable lower support of the squared distance.
\begin{proposition}[{\cite[Proposition~5.5]{FR10}}]\label{prop:lower-support}
  Let $(M,\mathcal D,g)$ be a complete sub-Riemannian manifold and $x\in M$. Assume there is a function $\phi:M\to \mathbb{R}$ differentiable at $y$ such that
  \begin{equation}
    \phi(y)=d^2(x,y)\quad \text{and} \quad d^2(x,z)\ge \phi(z)\quad \forall z\in M.
  \end{equation}
  Then there exists a unique minimizing geodesic $\gamma(t)=\exp_y(-\frac{t}{2}d_y\phi)$ between $x=\gamma(1)$ and $y=\gamma(0)$.
\end{proposition}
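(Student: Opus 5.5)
Fix a minimizing geodesic $\gamma_u$ from $x$ to $y$, $y=E_x(u)$, so that $d^2(x,y)=2J(u)$ with $u$ minimal. The plan is to show that the lower support $\phi$ forces a normal Lagrange multiplier equal to $-\tfrac12 d_y\phi$, and then to read off the geodesic from the exponential map.

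\emph{Step 1.} Since $d^2(x,E_x(v))\le 2J(v)$ for every $v\in\mathcal U_x$, the hypothesis gives
\[
\phi(E_x(v))\le d^2(x,E_x(v))\le 2J(v),
\]
with equality at $v=u$. Hence $v\mapsto 2J(v)-\phi(E_x(v))$ attains its minimum $0$ at $u$.

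\emph{Step 2.} The endpoint map $E_x$ is $C^1$ (Fréchet) on $\mathcal U_x$, and $\phi$ is differentiable at $y$. So the composition is differentiable at $u$, and the first-order condition is
\[
d_y\phi\circ D_uE_x=2D_uJ.
\]
This is exactly the Lagrange multiplier rule with $\lambda_0=1$ and $\eta=\tfrac12 d_y\phi$. Thus $\gamma_u$ is normal with covector $\tfrac12 d_y\phi$ at the endpoint. Note that we recover normality directly, with no abnormal alternative.

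\emph{Step 3.} Reverse time. The reversed curve $t\mapsto\gamma_u(1-t)$ is a minimizing geodesic from $y$ to $x$, and its normal extremal starts at $-\eta=-\tfrac12 d_y\phi$; the sign comes from reversal, consistent with the convention $x=\exp_y(-\eta)$ in the Lagrange multiplier theorem. Therefore $\gamma(t)=\exp_y(-\tfrac t2 d_y\phi)$ with $\gamma(0)=y$ and $\gamma(1)=x$.

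\emph{Step 4 (uniqueness).} Every minimizing geodesic between $x$ and $y$ satisfies Steps 1--3 with the same covector $d_y\phi$. A normal geodesic is determined by its initial covector through the Hamiltonian flow, so all such geodesics coincide. Existence of at least one minimizing geodesic comes from completeness of $M$.

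The main obstacle is Step 2. One must verify that the minimality argument genuinely produces a first-order condition when $\phi$ is merely differentiable at $y$ and not $C^1$ near it. Since $E_x$ is differentiable at $u$ and $\phi$ is differentiable at $E_x(u)$, the chain rule at a single point suffices. One must also justify that the case $y=x$, or the case $d_y\phi\in\Ann(\mathcal D_y)$, still fits the exponential-map convention. When $y=x$ the control is $u=0$ and $D_uJ=0$. The identity then forces $d_y\phi$ to vanish on $\operatorname{Im}D_0E_x=\mathcal D_x$, so $d_y\phi\in\Ann(\mathcal D_y)$ and $\exp_y$ of it is $y$ by definition, which is consistent.
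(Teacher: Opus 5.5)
Your argument is correct and is essentially the standard proof of \cite[Proposition~5.5]{FR10}, which the paper cites without reproducing. You compose the lower support with the endpoint map, so that $v\mapsto\|v\|_{L^2}^2-\phi(E_x(v))$ is minimized at $u$; the pointwise chain rule then gives the normal multiplier $\tfrac12 d_y\phi$ with $\lambda_0=1$, and time reversal together with uniqueness of Hamiltonian trajectories yields both the formula and uniqueness. Your degenerate-case remark is also fine: the case $d_y\phi\in\Ann(\mathcal D_y)$ with $y\ne x$ cannot occur, since $H$ is conserved along the normal extremal and would force $u\equiv0$.
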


\subsection{Regularity of the sub-Riemannian distance}
The sub-Riemannian distance is continuous, but it need not be locally Lipschitz in smooth coordinates. The Ball--Box theorem describes its local behavior in suitable coordinates.
\begin{theorem}[Ball--Box theorem; {\cite[Chapter~10]{ABB20}}]\label{thm:ball-box}
Let $(M,\mathcal D,g)$ be a sub-Riemannian manifold. For every $p\in M$, there exist a local coordinate chart $(U,z)$ centered at $p$ and a constant $C>0$ such that
\begin{equation}
  \frac{1}{C}\sum_{i=1}^n |z^i|^{1/w_i}\le d(p,q) \le C\sum_{i=1}^n |z^i|^{1/w_i},
\end{equation}
where $q=(z^1,\ldots,z^n)$ and $p=(0,\ldots,0)$, and $w_i\ge 1$, $i=1,\ldots,n$, are constants determined by the distribution at $p$.
\end{theorem}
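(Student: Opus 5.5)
The plan is the standard privileged-coordinates argument: build exponential-type coordinates adapted to the flag of the distribution at $p$, and then prove the two inequalities separately. The lower bound comes from weighted estimates along admissible curves, and the upper bound from an explicit construction of curves reaching boxes.

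First, fix $p$ and consider the flag
\[
\mathcal D^1_p\subset\mathcal D^2_p\subset\cdots\subset\mathcal D^s_p=T_pM,
\]
where $\mathcal D^j$ is spanned by the brackets of length at most $j$. Choose vector fields $Y_1,\ldots,Y_n$, each an iterated bracket of the $X_i$, such that $Y_i$ has length $w_i$, the weights are nondecreasing, and $Y_1(p),\ldots,Y_n(p)$ is a basis adapted to the flag. Then take coordinates of the second kind,
\[
(z^1,\ldots,z^n)\mapsto e^{z^nY_n}\circ\cdots\circ e^{z^1Y_1}(p).
\]
This is a local diffeomorphism near $p$, and I expect it to be privileged. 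That is, each coordinate function $z^i$ has nonholonomic order exactly $w_i$ at $p$: for every word of length $<w_i$ in the $X_j$, the corresponding derivative $X_{j_1}\cdots X_{j_l}z^i$ vanishes at $p$. I would verify this through the usual triangular-form argument, expanding the $X_j$ in these coordinates and grading monomials by weight.

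\textbf{Lower bound.} Let $\gamma$ be an admissible curve of length $\varepsilon$ from $p$ to $q$. Since $z^i$ has order $w_i$, a Taylor expansion along the flow gives $X_jz^i=O(\|z\|_w^{w_i-1})$, where $\|z\|_w=\sum_i|z^i|^{1/w_i}$ is the homogeneous norm. Integrating $\frac{d}{dt}z^i(\gamma(t))$ and running an induction, or equivalently a Gronwall argument on the homogeneous norm, yields
\[
|z^i(q)|\le C\varepsilon^{w_i}.
\]
This gives $\|z(q)\|_w\le C\,d(p,q)$.

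\textbf{Upper bound.} This is the main obstacle. One has to show that every point with $|z^i|\le\varepsilon^{w_i}$ is reachable by an admissible curve of length $\le C\varepsilon$. The approach is as follows.

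1. Approximate the flow $e^{tY_i}$ of a bracket of length $w_i$ by a commutator product of flows of the $X_j$ with times of order $|t|^{1/w_i}$. This product has total length $\lesssim|t|^{1/w_i}$ and reproduces $e^{tY_i}$ up to higher-order error.

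2. Define the map $\Phi_\varepsilon$ that composes these approximate flows for $i=1,\ldots,n$.

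3. Show that after the anisotropic dilation $\delta_\varepsilon$, the rescaled map $\delta_\varepsilon^{-1}\circ\Phi_\varepsilon\circ\delta_\varepsilon$ converges in $C^0$ to a map homogeneous of degree one with respect to the weights.

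4. Show that this limit map is open at the origin, by checking that its differential, or a degree argument, is nondegenerate thanks to the flag adaptation. Degree theory then gives uniform surjectivity onto boxes of size $c\varepsilon$ for all small $\varepsilon$.

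The delicate point is the uniformity: the error terms in the commutator formula must be $o(\varepsilon^{w_i})$ in the $i$-th coordinate. I would control this with the grading established in the privileged-coordinate step.

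Finally, the two bounds together give both inequalities on a neighborhood $U$, possibly after shrinking $U$. The step $s$ bounds the weights, $w_i\le s$, and the $w_i$ depend only on the flag at $p$, as the statement asserts.
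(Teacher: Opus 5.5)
The paper gives no proof of this statement; it imports the theorem from \cite[Chapter~10]{ABB20} and uses it as a black box. Your sketch is the standard privileged-coordinates argument and is correct in outline, but two steps rest on the wrong justification and should be repaired.

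\textbf{First point: weighted order of the $X_j$.} Both the estimate $X_jz^i=O(\|z\|_w^{w_i-1})$ and the convergence $\varepsilon(\delta_\varepsilon^{-1})_*X_j\to\hat X_j$ that your Step~3 needs say that each $X_j$ has weighted order $\ge-1$ in the coordinates $z$. This is not immediate from the definition of privileged coordinates; it is the theorem that nonholonomic and weighted orders coincide in such coordinates.

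For second-kind coordinates the needed inequality is easy. The Taylor coefficient of $f\circ\Phi$ at $t^\alpha$ is $(Y_1^{\alpha_1}\cdots Y_n^{\alpha_n}f)(p)/\alpha!$. The operator $Y_1^{\alpha_1}\cdots Y_n^{\alpha_n}$ is a combination of words in the $X_j$ of length $\sum_i\alpha_iw_i$. Hence weighted order is at least nonholonomic order, and applying this to $f=X_jz^i$ gives what you need.

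The privileged property itself, i.e.\ that each $z^i$ has nonholonomic order $w_i$, is a genuine theorem and is best cited. ``Expanding the $X_j$ and grading monomials by weight'' presupposes it. For the lower bound alone you can bypass all of this. If $f$ has nonholonomic order $\ge k$ at $p$, then $X_jf$ has order $\ge k-1$, and induction on $k$ gives $|f(\gamma(t))|\le Ct^k$ along curves from $p$ with $|u|\le1$.

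\textbf{Second point: the ``delicate point'' is not the right one.} An error bound $o(\varepsilon^{w_k})$ in every coordinate is false in general. The BCH terms of bracket length $l>w_i$ in the iterated commutator move the weight-$l$ coordinates by an amount comparable to $\varepsilon^l$, and they survive in the limit. So the limit map $\hat\Phi$ is not the product of the exact exponentials $e^{t_i\hat Y_i}$. A bound only in the $i$-th coordinate of the $i$-th factor is true but insufficient.

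What Step~4 actually uses is the triangular structure of $\hat\Phi$. A homogeneous vector field of degree $-l$ has no components of weight $<l$ and has constant components of weight $l$. Therefore the weight-$w$ block of $\hat\Phi(t)$ equals $\sum_{w_i=w}t_ic_i+G_w(t_{<w})$. Here $G_w$ is continuous and homogeneous of degree $w$ in the variables of weight $<w$, and the vectors $c_i$ (the weight-$w$ components of $Y_i(p)$) form a basis by adaptedness.

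It follows that $\hat\Phi$ is a homeomorphism. Moreover $G_w=O(|t|^{w/(w-1)})$, so $\hat\Phi$ is differentiable at $0$ with invertible block-diagonal differential, even though it involves $|t_i|^{1/w_i}$. Its local degree is therefore $\pm1$. Stability of the degree under uniform perturbation then gives the box inclusion, uniformly for small $\varepsilon$.
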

\begin{remark}
For brevity, we have stated the Ball--Box theorem in a simplified form.
The weights $w_i$ are determined precisely by the growth vector of the
distribution at the reference point. We refer to
\cite[Theorem~10.67]{ABB20}
for the definitions and the precise statement.
\end{remark}
The Ball--Box theorem shows that the distance function cannot be locally
Lipschitz in smooth coordinates near $(x,x)$ unless $\mathcal D_x=T_xM$.
It also yields the following local H\"older estimate as a direct corollary.
\begin{proposition}[{\cite[Proposition~11.1]{ABB20}}]
  Let $(M,\mathcal D,g)$ be a sub-Riemannian manifold. Fix $x\in M$. In a sufficiently small coordinate neighborhood $U$ of $x$, there is a constant $C>0$ such that, for $p,q\in U$,
  \begin{equation}
    d(p,q)\le C|p-q|^{1/s},
  \end{equation}
where $s=s(x)$ represents the step of the distribution at $x$.
\end{proposition}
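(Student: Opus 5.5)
The plan is to derive the H\"older bound from the upper half of the Ball--Box theorem (Theorem~\ref{thm:ball-box}), applied not only at $x$ but uniformly at all base points near $x$. Pointwise Ball--Box at $x$ only gives $d(x,q)\le C|q|^{1/s}$, that is, with one endpoint fixed at $x$. For arbitrary $p,q\in U$ we need a constant uniform in the base point $p$, and an exponent $1/s$ that is valid at every nearby point.

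First, since the step is assumed to equal $s$ everywhere, every weight $w_i$ at every point is at most $s$. Hence for $|z|\le 1$ we have $|z^i|^{1/w_i}\le |z|^{1/s}$. This reduces the statement to a uniform version of the upper estimate $d(p,q)\le C\sum_i |z_p^i(q)|^{1/w_i}$, where $z_p$ denote privileged-type coordinates centered at $p$.

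Second, I would avoid constructing privileged coordinates depending continuously on $p$, which is delicate at singular points. Instead I would give a direct construction in the spirit of the proof of the upper bound:
\begin{itemize}
\item Pick brackets $Y_1,\dots,Y_n$ of $X_1,\dots,X_k$ of lengths $\ell_j\le s$ that span $T_xM$; by continuity they span on a neighborhood $U$.
\item Consider the map $\Phi_p(t_1,\dots,t_n)=e^{t_nY_n}\circ\cdots\circ e^{t_1Y_1}(p)$, which is a local diffeomorphism near $0$ with derivative uniformly invertible for $p\in U$. The inverse function theorem with parameters gives a uniform radius $r$ and the bound $|t|\le C|q-p|$ whenever $q=\Phi_p(t)$.
\item Approximate each flow $e^{tY_j}$ by a concatenation of flows of the $X_i$ (commutator products), with total length $\le C|t|^{1/\ell_j}$ and error $O(|t|^{1+1/\ell_j})$-type. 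Alternatively, use the standard fact that the point reached by a bracket of length $\ell$ along time $|t|$ is attained by an admissible curve of length $C|t|^{1/\ell}$ with a controlled endpoint error.
\end{itemize}

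The main obstacle is that these commutator approximations are not exact. A clean way to handle this is to define instead $\Psi_p(t)$ as the composition of the exact commutator-flow maps $P_j(t_j)$, each built from $X_i$-flows with time parameters of size $|t_j|^{1/\ell_j}$. The map $\Psi_p$ is only $C^1$ after the reparametrization $\tau_j=t_j$, but it satisfies $\partial_{t_j}\Psi_p(0)=Y_j(p)$ up to sign. Then a topological degree (Brouwer) argument gives that $\Psi_p$ covers a ball $B(p,c\rho)$ when restricted to $|t|\le\rho$, uniformly in $p\in U$; this is the standard open-mapping argument used for Chow's theorem. Any $q$ with $|q-p|=\rho$ small is then reached by an admissible curve of length
\[
\le C\sum_j|t_j|^{1/\ell_j}\le C\rho^{1/s}.
\]
For $|p-q|$ not small, continuity of $d$ on the compact $\bar U\times\bar U$ gives the bound after enlarging $C$.
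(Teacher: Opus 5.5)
Your argument is correct, but it follows a different route from the paper. The paper gives no proof of its own. It cites \cite[Proposition~11.1]{ABB20} and presents the estimate as a direct corollary of the Ball--Box theorem (Theorem~\ref{thm:ball-box}): the upper bound $d(p,q)\le C\sum_i|z^i|^{1/w_i}$ combined with $w_i\le s$ and $|z|\le C|p-q|$.

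As you point out, that theorem is centered at a single point, with chart and constant depending on the center. Read literally, it gives only $d(x,q)\le C|x-q|^{1/s}$. The two-point estimate on $U$ tacitly needs the upper bound with constants locally uniform in the center, and this requires extra work when the growth vector is not constant near $x$. You supply exactly this uniformity through the Chow-type construction: brackets $Y_j$ of length $\ell_j\le s(x)$, commutator products of $X_i$-flows reparametrized by $|t_j|^{1/\ell_j}$, and a Brouwer covering argument, all uniform in $p$.

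The paper's route is a one-liner given the full Ball--Box machinery. Yours is self-contained and uses only the leading-order expansion of commutator flows. It also does not need the paper's standing constant-step assumption. Your first paragraph on weights is therefore superfluous: you only use that brackets of length $\le s(x)$ span $T_xM$, and hence span nearby.

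When writing it up, make two points precise.
\begin{itemize}
\item For $t_j<0$ (unavoidable when $\ell_j$ is even), use the inverse commutator product. Then $P_j(t_j)(z)=z+t_jY_j(z)+O(|t_j|^{1+1/\ell_j})$ holds for both signs; otherwise $\Psi_p$ only covers a half-space.
\item The Brouwer step does not need $C^1$ regularity. It needs the expansion $\Psi_p(t)=p+\sum_j t_jY_j(p)+O(|t|^{1+1/s})$ with remainder uniform in $p\in U$. This follows from the smooth dependence of the flows on $(\tau,z)$, and it should replace your remark that $\Psi_p$ is ``only $C^1$ after the reparametrization $\tau_j=t_j$'', which is not meaningful as written.
\end{itemize}
Finally, you can drop the closing compactness step by shrinking $U$ below the uniform covering radius.
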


\begin{definition}[Smooth point and cut locus]
Let $(M,\mathcal D,g)$ be a complete sub-Riemannian manifold. We say that $y\in M$ is a \emph{smooth point} with respect to $x$
if there exists a unique minimizing geodesic from
$x$ to $y$, this geodesic is strictly normal, and $x$ and $y$
are not conjugate along it. The \emph{cut locus} $\Cut(x)$ of $x$ is the complement of the set of
smooth points of $x$. Similarly,
\[
  \Cut(M):=\bigl\{(x,y)\in M\times M:y\in\Cut(x)\bigr\}.
\]
\end{definition}

The sub-Riemannian distance behaves well outside the cut locus.

\begin{proposition}[see {\cite[Chapter~11]{ABB20}}]
The set of smooth points is open and dense in $M$, and $d^2$ is
smooth on $M\times M\setminus\Cut(M)$.
\end{proposition}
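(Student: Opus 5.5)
The plan is to prove the two claims separately: first that $d^2$ is smooth on $M\times M\setminus\Cut(M)$, then that for fixed $x$ the set of smooth points $M\setminus\Cut(x)$ is open and dense. Throughout I fix $x$ and work with the exponential map $\exp_x$, the endpoint map $E_x$, and the energy $J$.

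\textbf{Openness and local smoothness.} Let $y$ be a smooth point with respect to $x$. There is a unique minimizing geodesic, and it is strictly normal, with covector $\lambda\in T_x^*M$ such that $y=\exp_x(\lambda)$. Since $y$ is not conjugate to $x$ along it, $D_\lambda\exp_x$ is invertible. The inverse function theorem then gives a smooth local inverse $\psi$ of $\exp_x$ from a neighborhood $V$ of $y$ onto a neighborhood of $\lambda$. Because $H$ is constant along the Hamiltonian flow, a candidate for the squared distance is
\[
d^2(x,z)=2H(\psi(z)).
\]
To justify it I use compactness of minimizers on a complete manifold: if $z_n\to y$, then minimizing geodesics from $x$ to $z_n$ subconverge in $L^2$-weak, in fact strongly, to a minimizer from $x$ to $y$, hence to the unique one. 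Strict normality is an open condition on the set of minimizers, since abnormal lifts are closed under limits by compactness of normalized covectors. So for $z$ near $y$ every minimizer is strictly normal with covector near $\lambda$, hence equals $\psi(z)$. This yields uniqueness, strict normality and non-conjugacy near $y$, so the smooth points form an open set and $d^2(x,\cdot)$ is smooth there.

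\textbf{Joint smoothness.} Now I let $x$ vary as well. I apply the inverse function theorem to $(x,\lambda)\mapsto(x,\exp_x\lambda)$ on $T^*M$, whose differential at $(x,\lambda)$ is invertible precisely when $x$ and $y$ are non-conjugate. The same compactness argument, now with both endpoints varying, shows that $d^2$ is smooth near $(x,y)$. Hence $d^2$ is smooth on $M\times M\setminus\Cut(M)$, and $\Cut(M)$ is closed.

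\textbf{Density.} This is the main obstacle. Take an open set $W$; I need a smooth point inside it. For this I use Proposition~\ref{prop:lower-support}. Fix $z_0\in W$ and choose a smooth function $\phi$ with $d^2(x,\cdot)-\phi$ attaining a strict local minimum at some $y\in W$. One way is to take $\phi(z)=-C|z-z_0|^2$ in coordinates, for large $C$: since $d^2$ is continuous and $\phi$ is smooth, a minimum of $d^2-\phi$ on a small closed ball is attained in the interior. At that point $y$, $d^2(x,\cdot)$ has a smooth lower support, so Proposition~\ref{prop:lower-support} gives a unique minimizer $\exp_y(-\tfrac{1}{2}d_y\phi)$, and it is normal.

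Two defects remain at $y$: the minimizer might also be abnormal, and $y$ might be conjugate to $x$.

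For conjugacy, I would use that a lower support which is $C^2$ forces second-order optimality. I would then perturb $y$ along the geodesic toward $x$: points $\gamma(t)$ for $t<1$ are not conjugate and have unique minimizers. This is the standard argument that interior points of minimizers are smooth, and it requires the interior points to be strictly normal.

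For abnormality, I would invoke the fact from \cite[Chapter~11]{ABB20} that the set of points reached by strictly abnormal or non-strictly-normal minimizers has empty interior, by Sard-type arguments on the endpoint map restricted to the abnormal set. I would therefore choose $z_0$ avoiding that set. I expect this Sard-type step to be the genuinely delicate one, and I would cite it rather than reprove it.
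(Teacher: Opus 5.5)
The paper does not prove this proposition; it only cites \cite[Chapter~11]{ABB20}. Your openness and joint-smoothness arguments are the standard ones and are fine. One point should be made explicit: when you say that minimizers to nearby points have covectors near $\lambda$, add the reason. If their normal covectors were unbounded, the normalized covectors would converge to an abnormal covector of the limit minimizer, contradicting strict normality.

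The density part has a genuine gap, exactly at the step you flag as delicate. You propose to cite that the set $\mathcal S_x$ of endpoints of abnormal minimizers from $x$ has empty interior, ``by Sard-type arguments on the endpoint map restricted to the abnormal set''. No such argument is available. The abnormal controls form a singular subset of an infinite-dimensional space, and whether $\mathcal S_x$ is even Lebesgue-null is the open minimizing Sard conjecture. That $\mathcal S_x$ is nowhere dense is true, but it is a corollary of Agrachev's theorem that smooth points are dense (``every sub-Riemannian metric has points of smoothness''), which is the very statement you are proving. Combined with the rest of your argument, the cited fact is equivalent to the density claim, so invoking it is circular. Even granting it, choosing $z_0\notin\mathcal S_x$ is not enough, because the contact point $y$ is some other point of the ball around $z_0$. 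You need a whole ball disjoint from $\mathcal S_x$, hence closedness of $\mathcal S_x$. This does hold, by compactness of minimizers and closedness of abnormal lifts under limits, but it has to be said.

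The lower-support trick cannot remove this difficulty by itself. On $\mathbb H\times\R$, which has constant step and appears in the paper, $d^2\bigl((0,0),(q,s)\bigr)=d_{\mathbb H}^2(0,q)+s^2\ge s^2$, with equality on the line $\{q=0\}$. So each point $(0,s_1)$ has the polynomial lower support $s^2$. Yet its unique minimizer $t\mapsto(0,ts_1)$ is abnormal on every subsegment, and none of its interior points is smooth.

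What your argument does establish is the ideal case, which is the only case the paper actually uses. There $\mathcal S_x=\{x\}$, so take the ball around $z_0$ away from $x$. The minimizer $\gamma$ to the contact point is then nonconstant, so every restriction $\gamma|_{[0,t]}$ is strictly normal. It is unique by your concatenation argument, and non-conjugate for $t<1$ by the Jacobi-type theorem for strictly normal minimizers. Thus $\gamma(t)$ is a smooth point in $W$ for $t$ close to $1$. For a general complete structure you still need an actual argument that some contact point in $W$ has a non-abnormal minimizer, and that is precisely the nontrivial content of the density statement.
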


The preceding proposition suggests that the distance enjoys better regularity
in the absence of abnormal minimizing geodesics. This motivates the following
ideality assumption, which excludes nonconstant abnormal minimizing geodesics.
\begin{definition}
We say that $(M,\mathcal D,g)$ is \emph{ideal} if it is complete and admits no nontrivial
abnormal minimizing geodesics.
\end{definition}
\begin{proposition}[{see \cite[Theorem~5.9]{FR10}}]
Let $(M,\mathcal D,g)$ be an ideal sub-Riemannian manifold. Then
$d^2$ is locally semiconcave outside the diagonal. In
particular, $d^2$ is locally Lipschitz outside the diagonal.
\end{proposition}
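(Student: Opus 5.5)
The plan is to follow the argument in \cite{FR10}: local semiconcavity comes from an explicit upper support built from minimizing geodesics, and the Lipschitz property is then a standard consequence. Fix a compact set $K\subset M\times M$ disjoint from the diagonal. By completeness, every pair $(x,y)\in K$ is joined by a minimizing geodesic. Since $\mathcal D$ is bracket generating, $d$ is continuous. The controls of these geodesics satisfy $\|u\|_{L^2}^2=d^2(x,y)$, which is bounded on $K$ and bounded below by a positive constant because $x\neq y$.

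\emph{Step 1: uniform surjectivity of the endpoint map.} Ideality means that every minimizing geodesic $\gamma_u$ joining a pair in $K$ is nonconstant and has no abnormal lift. The Lagrange multiplier rule then shows that $u$ is not a critical point of $E_x$; otherwise one would get a multiplier with $\lambda_0=0$. So $D_uE_x$ is surjective. Next, the set of minimal controls of minimizing geodesics joining pairs in $K$ is compact in $L^2$, using weak compactness, lower semicontinuity of the energy and convergence of norms. By openness of surjectivity and compactness, we get uniform constants: for each such $u$ there is an $n$-dimensional space $V_u=\mathrm{span}\{v_1,\dots,v_n\}$ of controls, with uniformly bounded norms, such that $w\mapsto E_x(u+w)$ is a local diffeomorphism from $V_u$ onto a neighborhood of $y$. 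The inverse has uniform $C^2$ bounds on a ball of uniform radius. This step is the main obstacle. The delicate part is securing the uniformity simultaneously in $u$ and in the base point $x$, since we need to vary both endpoints. To handle the base point, I would also perturb the initial point by composing with the flow of a smooth local diffeomorphism close to the identity.

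\emph{Step 2: upper support.} For $(x',y')$ near $(x,y)$ in chart coordinates, build a control $u+w(x',y')$ that steers $x'$ to $y'$, where $w$ depends $C^2$ on $(x',y')$, $w(x,y)=0$, and the $C^2$ bounds are uniform. To do this, transport $x'$ to $x$ along a short smooth admissible path, or fold the initial-point perturbation into the inverse function theorem applied to $(x',w)\mapsto E_{x'}(u+w)$. Then
\[
d^2(x',y')\le \|u+w(x',y')\|_{L^2}^2=:\Phi_{(x,y)}(x',y'),
\]
with equality at $(x,y)$, and each $\Phi_{(x,y)}$ is $C^2$ with uniformly bounded Hessian. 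So $d^2$ is locally the infimum of a family of $C^2$ functions with a uniform $C^2$ bound. This is exactly local semiconcavity: $d^2-C|\cdot|^2$ is locally concave in coordinates.

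\emph{Step 3: Lipschitz.} Semiconcave functions are locally Lipschitz, because a locally concave function is locally Lipschitz on the interior of its domain. Alternatively, the uniform $C^1$ bounds on $\Phi$ give the estimate directly. Since every pair $(x,y)$ off the diagonal has a compact neighborhood avoiding the diagonal, the conclusion holds on all of $M\times M$ minus the diagonal.
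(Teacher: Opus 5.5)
Your proposal is correct and is essentially the argument behind \cite[Theorem~5.9]{FR10}, which the paper cites without reproducing. In both, ideality makes every minimizing control between distinct points a regular point of the endpoint map, compactness of minimizing controls over a compact set off the diagonal gives uniform constants, and the implicit function theorem for $(x',w)\mapsto E_{x'}(u+w)$ produces $C^2$ upper supports of $d^2$ with uniform $C^2$ bounds. Of the options you list for moving the base point, keep only this joint implicit-function one. First steering $x'$ to $x$ and then following a control from $x$ costs at least $d^2(x,y)+2d(x,y)\,d(x',x)$ (take $y'=y$), and by the Ball--Box theorem $d(x',x)$ is not $O(|x'-x|)$ when $\mathcal D\ne TM$, so such a support cannot be differentiable at $x'=x$.
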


On the other hand, under the ideality assumption, the cut locus can be characterized as the set of points where the squared distance function fails to be semiconvex, as in Riemannian geometry.

\begin{theorem}[see {\cite[Theorem~6]{BR19}}]
Let $(M,\mathcal D,g)$ be ideal and let $y\ne x$. Then $y\in\Cut(x)$ if and only if
$d^2(x,\cdot)$ fails to be semiconvex at $y$.
\end{theorem}

\section{The multi-marginal Kantorovich problem}\label{sec:mmot}

Let $(M,\mathcal D,g)$ be a sub-Riemannian manifold equipped with a fixed
smooth positive measure $\m$, and let $\mu_i\in\Pc(M)$ for $i=1,\ldots,m$.
We consider the Kantorovich problem \eqref{kp} introduced in
Section~\ref{sec:introduction}.
The dual of \eqref{kp} is to find
$\varphi=(\varphi_1,\ldots,\varphi_m)$ maximizing
\begin{equation}\label{dp}\tag{DP} 
    \sum_{i=1}^m\int_M\varphi_i(x_i)\,d\mu_i(x_i)
\end{equation}
subject to
\[
  \sum_{i=1}^m\varphi_i(x_i)\le c(x_1,\ldots,x_m),
\]
where $c:M^m\to \mathbb{R}$ is the barycentric cost defined in \eqref{eq:barycentric-cost}.
\begin{definition}[$c$-conjugate tuple]
We say that $\varphi$ is \emph{$c$-conjugate} if, for every $i=1,\ldots,m$,
\[
  \varphi_i(x_i)
  =\inf_{x_j\in M,\,j\ne i}
   \left\{c(x_1,\ldots,x_m)-\sum_{j\ne i}\varphi_j(x_j)\right\}.
\]
\end{definition}
For convenience, we recall the associated notion of cost concavity.
\begin{definition}
  A function $\phi:M\to \mathbb{R}$ is \emph{$d^2$-concave} if there exists a function $\psi:M\to \mathbb{R}\cup\{-\infty\}$ with $\psi\not\equiv -\infty$ 
  such that for all $x \in M$,
  \begin{equation}
    \phi(x)=\inf_{y\in M}\{d^2(x,y)-\psi(y)\}.
  \end{equation}
  If $\phi$ is $d^2$-concave, we define the \emph{$d^2$-superdifferential} of $\phi$ at $x$ by
  \begin{equation}
    \partial^{d^2}\phi(x):=\{y\in M:\phi(x)+\psi(y)=d^2(x,y)\}.
  \end{equation}
  Moreover, the $d^2$-superdifferential of $\phi$ is given by
  \begin{equation}
    \partial^{d^2}\phi=\{(x,y)\in M\times M:y\in\partial^{d^2}\phi(x)\}.
  \end{equation}
\end{definition}

The existence of solutions to the Kantorovich problem and its dual is
well known under mild assumptions; we refer the reader to
\cite[Theorem~2.1]{KP15} for the Riemannian case.

\begin{theorem}
Let $(M,\mathcal D,g)$ be a sub-Riemannian manifold, and let
$\mu_1,\ldots,\mu_m\in\mathcal P_c(M)$.
There exist a $c$-conjugate tuple
$\varphi=(\varphi_1,\ldots,\varphi_m)$ solving \eqref{dp}, and an optimal plan $\gamma\in\mathcal P_c(M^m)$ solving \eqref{kp},
with
\[
  \sum_i\varphi_i(x_i)=c(x_1,\ldots,x_m),
  \qquad\text{for }\gamma\text{-a.e. }x.
\]
Set
\[
  \Gamma_\varphi
  :=\left\{(x_1,\ldots,x_m)\in M^m:
  \sum_{i=1}^m\varphi_i(x_i)=c(x_1,\ldots,x_m)\right\}.
\]
Then $\Gamma_\varphi$ is $c$-cyclically monotone and
$\supp\gamma\subset\Gamma_\varphi$. 
Here, $c$-cyclic monotonicity means that, for every finite family
$x^1,\ldots,x^N\in\Gamma_\varphi$ and every choice of permutations
$\sigma_2,\ldots,\sigma_m$ of $\{1,\ldots,N\}$,
\[
  \sum_{k=1}^N c(x_1^k,\ldots,x_m^k)
  \le
  \sum_{k=1}^N
  c\bigl(x_1^k,x_2^{\sigma_2(k)},\ldots,x_m^{\sigma_m(k)}\bigr).
\]
\end{theorem}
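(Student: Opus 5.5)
The argument follows the Riemannian proof of \cite[Theorem~2.1]{KP15}. It uses only continuity of $d$ and compactness of the supports, so sub-Riemannian features enter only through the Chow--Rashevskii proposition and completeness.

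\textbf{Step 1: the cost.} Choose a compact set $K\subset M$ containing $\bigcup_i\supp\mu_i$. We first show that $c$ is continuous and bounded on $K^m$. Let $x\in K^m$ and $y\in M$ with $\sum_i d^2(x_i,y)\le c(x)+1$. Comparing with the choice $y=x_1$ gives $c(x)\le \sum_i d^2(x_i,x_1)\le m\,\mathrm{diam}(K)^2$. Hence $d(x_1,y)$ is bounded, so $y$ ranges over a fixed closed bounded set $K'$. Since we work in the complete setting, $K'$ is compact by Hopf--Rinow for length spaces. On $K^m$ we may therefore write
\[
c(x)=\min_{y\in K'}\sum_i d^2(x_i,y).
\]
This is a minimum of a jointly continuous function over a compact set, so $c$ is continuous on $K^m$, and in particular the minimum is attained.

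\textbf{Step 2: the primal problem.} The set of plans with marginals $\mu_i$ is nonempty, since it contains $\otimes_i\mu_i$. It is tight and weakly closed, hence weakly compact, and every such plan is supported in $K^m$. Because $c$ is bounded and continuous there, the functional $\gamma\mapsto\int c\,d\gamma$ is weakly continuous. Therefore a minimizer $\gamma$ exists, and $\gamma\in\Pc(M^m)$.

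\textbf{Step 3: the dual problem.} We use Kantorovich duality for continuous costs on compact spaces, which gives $\min(\mathrm{KP})=\sup(\mathrm{DP})$. To obtain a dual maximizer, take a maximizing sequence and replace each $\varphi^n$ by its $c$-convexified tuple. That is, replace $\varphi_1$ by $\inf_{x_j,\,j\neq1}\{c-\sum_{j\ne1}\varphi_j\}$, then $\varphi_2$ in the same way, and so on, iterating once more through the indices to reach a genuinely $c$-conjugate tuple. Each replacement does not decrease the dual value and preserves the constraint.

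The resulting functions, restricted to $K$ with infima taken over $K$, share the modulus of continuity of $c$ on the compact set $K^m$, so the family is equicontinuous. After normalizing by additive constants summing to zero (for instance, fix $\varphi_i(\bar x)=0$ for $i\ge2$), the family is also uniformly bounded. Arzelà--Ascoli then yields a uniform limit, which is $c$-conjugate and optimal. We extend it to all of $M$ by the same infimum formula, with the infimum over $x_j\in K$.

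\textbf{Step 4: complementary slackness and monotonicity.} Integrating the inequality $c-\sum_i\varphi_i\ge0$ against $\gamma$ gives $0$ by duality. Since the integrand is nonnegative and continuous, it vanishes on $\supp\gamma$, so $\supp\gamma\subset\Gamma_\varphi$. For $c$-cyclic monotonicity, take $x^1,\dots,x^N\in\Gamma_\varphi$ and permutations $\sigma_2,\ldots,\sigma_m$. Then
\[
\sum_k c(x^k)=\sum_k\sum_i\varphi_i(x_i^k)=\sum_k\sum_i\varphi_i\bigl(x_i^{\sigma_i(k)}\bigr)\le\sum_k c\bigl(x_1^k,x_2^{\sigma_2(k)},\ldots\bigr),
\]
where the middle equality holds because permuting the indices within each coordinate leaves the total sum unchanged, and the final inequality is the dual constraint.

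The main obstacle is Step 3, specifically the equicontinuity and the iteration needed to make the tuple exactly $c$-conjugate. In the Riemannian case one has Lipschitz bounds, but here only the uniform continuity of $c$ on $K^m$ is available, so the argument must be phrased using a modulus of continuity rather than Lipschitz constants.
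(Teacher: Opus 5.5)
Your route (compactness for \eqref{kp}, Kantorovich duality on $K^m$, $c$-transforms plus Arzel\`a--Ascoli, then complementary slackness) is the standard argument of \cite[Theorem~2.1]{KP15}, to which the paper simply defers. Steps 1--3 are sound as long as everything lives on $K$. In fact a single pass of $c$-transforms already gives a $c$-conjugate tuple: after one pass $\varphi_j'\ge\varphi_j$ for every $j$, so transforming again can only decrease each function, while the constraint forces it not to.

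\textbf{The gap.} The genuine problem is the last sentence of Step 3. Suppose you extend each $\varphi_i$ to $M$ by $\varphi_i(x_i):=\inf_{x_j\in K,\,j\ne i}\{c(x)-\sum_{j\ne i}\varphi_j(x_j)\}$. Then $\sum_i\varphi_i(x_i)\le c(x)$ holds only when at most one coordinate lies outside $K$. Step 4, however, applies the dual constraint at arbitrary permuted tuples built from points of $\Gamma_\varphi\subset M^m$, and this fails whenever $K\ne M$.

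Here is a counterexample. Take $m=2$ and $\mu_1=\mu_2=\delta_p$, so that $c(x_1,x_2)=\tfrac12d^2(x_1,x_2)$. The tuple $\varphi_1=\varphi_2=0$ on $K$ is $c$-conjugate relative to $K$ and optimal, since both values are $0$. Your extension is $\varphi_i=\tfrac12d^2(\cdot,K)$.
\begin{itemize}
\item For $q\notin K$ one gets $\varphi_1(q)+\varphi_2(q)=d^2(q,K)>0=c(q,q)$, so the tuple is not admissible for \eqref{dp} on $M^2$.
\item Let $z\in K$ realize $d(q,K)$. Then $(q,z)$ and $(z,q)$ both lie in $\Gamma_\varphi$. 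Swapping their second coordinates gives $c(q,q)+c(z,z)=0<d^2(q,z)=c(q,z)+c(z,q)$, so $\Gamma_\varphi$ is not $c$-cyclically monotone.
\end{itemize}

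\textbf{The repair.} Keep the whole statement relative to $K$. Define $c$-conjugacy with infima over $K$ (equivalently, set $\varphi_i=-\infty$ off $K$ in the constraint) and take $\Gamma_\varphi\subset K^m$. Then every tuple in Step 4 lies in $K^m$ and your argument goes through unchanged.

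This relative form is the one proved in the Riemannian literature being cited, and it is all the paper uses afterwards. The exchange property only involves tuples in $K^m$. Each individual extension $\varphi_i(x_i)=\inf_{x_j\in K}\{\cdots\}$ is still $d^2$-concave on $M$, which is what Proposition~\ref{prop:potential-barycenter} needs.

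A real-valued tuple that is $c$-conjugate with infima over all of $M$, as the definition literally reads, would need a separate argument. Naively iterating $c$-transforms over $M$ can produce the value $-\infty$. Already for $M=\R^n$ and $m=2$, the double $c$-transform over $\R^n$ of a bounded function on a closed ball equals $-\infty$ off the ball.

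\textbf{A minor point.} Step 1 invokes completeness, which the statement does not assume and which is not needed. Near-minimizers $y$ stay at bounded distance from $K$, and using them one gets $|c(x)-c(x')|\le C_K\sum_i d(x_i,x_i')$ on $K^m$. Attainment of the infimum in $c$ is never used.
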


The first property relates the MMOT potentials to the squared-distance cost.

\begin{proposition}\label{prop:potential-barycenter}
Let $\varphi$ be a $c$-conjugate solution to \textup{(DP)}. Then each $\varphi_i$ is
$d^2$-concave, and every barycenter $y$ of
$(x_1,\ldots,x_m)\in\supp\gamma$ belongs to
$\partial^{d^2}\varphi_i(x_i)$.
\end{proposition}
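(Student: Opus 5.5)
Fix $i$ and write the barycentric cost as an infimum over $y$. By $c$-conjugacy,
\[
  \varphi_i(x_i)=\inf_{x_j,\,j\ne i}\inf_{y\in M}\Bigl\{d^2(x_i,y)+\sum_{j\ne i}\bigl(d^2(x_j,y)-\varphi_j(x_j)\bigr)\Bigr\}
  =\inf_{y\in M}\bigl\{d^2(x_i,y)-\psi_i(y)\bigr\},
\]
where I define
\[
  \psi_i(y):=\sup_{x_j,\,j\ne i}\sum_{j\ne i}\bigl(\varphi_j(x_j)-d^2(x_j,y)\bigr)\in\R\cup\{+\infty\}.
\]
The infima commute because both are infima. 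This already exhibits $\varphi_i$ in the form required by the definition of $d^2$-concavity, with $\psi=\psi_i$. Two things must be checked: that $\psi_i\not\equiv-\infty$, and that $\psi_i$ does not take the value $+\infty$ in a way that makes the formula degenerate.

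For the first, $c$-conjugacy together with finiteness of $\varphi$ on the supports (the duality theorem gives an optimal $\varphi$ with $\sum_i\varphi_i=c$ on $\supp\gamma$) shows that some $\varphi_j$ are finite. Hence $\psi_i(y)>-\infty$ for every $y$. For the upper bound, finiteness of $\varphi_i$ at a single point $x_i^0$ gives $d^2(x_i^0,y)-\psi_i(y)\ge\varphi_i(x_i^0)>-\infty$ for all $y$, so $\psi_i(y)\le d^2(x_i^0,y)-\varphi_i(x_i^0)<\infty$. Thus $\psi_i$ is real-valued, provided each $\varphi_i$ is finite somewhere. If the paper's convention allows $\varphi_i$ to take the value $-\infty$ off the supports, the same bounds still hold on the relevant set, and that is the only technical point to watch.

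For the superdifferential statement, let $(x_1,\dots,x_m)\in\supp\gamma\subset\Gamma_\varphi$ and let $y$ be a barycenter. Then
\[
  \sum_k\varphi_k(x_k)=c(x)=\sum_k d^2(x_k,y).
\]
By the definition of $\psi_i$ as a supremum,
\[
  \psi_i(y)\ge\sum_{j\ne i}\bigl(\varphi_j(x_j)-d^2(x_j,y)\bigr)=d^2(x_i,y)-\varphi_i(x_i),
\]
so $\varphi_i(x_i)+\psi_i(y)\ge d^2(x_i,y)$. The opposite inequality $\varphi_i(x_i)\le d^2(x_i,y)-\psi_i(y)$ holds by the representation above. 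Hence equality holds, which is exactly $y\in\partial^{d^2}\varphi_i(x_i)$.

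The main (mild) obstacle is the finiteness issue for $\psi_i$ discussed above. The rest is a direct unwinding of the definitions and needs no sub-Riemannian structure. In particular, the existence of a barycenter $y$ is given in the statement, and in general it follows from continuity of $d$ and properness of a complete space.
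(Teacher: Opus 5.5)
Your proposal is correct and follows the paper's proof. You use the same commutation of infima to write $\varphi_i(x_i)=\inf_{y}\{d^2(x_i,y)-\psi_i(y)\}$, and your $\psi_i$ coincides with the paper's $\sum_{j\ne i}\sup_{x_j}\{\varphi_j(x_j)-d^2(x_j,y)\}$. Your explicit two-sided check of $\varphi_i(x_i)+\psi_i(y)=d^2(x_i,y)$ on $\supp\gamma\subset\Gamma_\varphi$, together with the finiteness check on $\psi_i$, simply spells out what the paper calls immediate; for $\psi_i>-\infty$ you need each $\varphi_j$, $j\ne i$, to be finite somewhere, not just some of them, and this follows from $\sum_k\varphi_k(x_k)=c(x)$ at any point of $\supp\gamma$.
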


\begin{proof}
Since $\varphi$ is $c$-conjugate, for each $i$,
\begin{align*}
  \varphi_i(x_i)
  &=\inf_{x_j,\,j\ne i}
    \left\{c(x_1,\ldots,x_m)-\sum_{j\ne i}\varphi_j(x_j)\right\}\\
  &=\inf_y\left\{
    d^2(x_i,y)+\sum_{j\ne i}
    \inf_{x_j}\bigl(d^2(x_j,y)-\varphi_j(x_j)\bigr)
    \right\}.
\end{align*}
Set
\[
  \psi_i(y)
  :=\sum_{j\ne i}
  \sup_{x_j}\bigl\{\varphi_j(x_j)-d^2(x_j,y)\bigr\}.
\]
The conclusion follows immediately from this representation.
\end{proof}

Provided that the squared sub-Riemannian distance is locally Lipschitz off the diagonal and $\mu_1\in\mathcal{P}^{\mathrm{ac}}_c(M)$, the preceding proposition, together with sub-Riemannian optimal transport theory,
implies that, for $\mu_1$-a.e. $x_1\in\supp\mu_1$, the barycenter $y$
is uniquely determined by $x_1$ and $\varphi_1$. More precisely, define
\begin{equation}\label{eq:barycenter-map}
  T(x_1):=
  \begin{cases}
    \exp_{x_1}\bigl(-\frac12 d_{x_1}\varphi_1\bigr),
      & x_1\notin\partial^{d^2}\varphi_1(x_1),\\
    x_1,
      & x_1\in\partial^{d^2}\varphi_1(x_1).
  \end{cases}
\end{equation}
Then $y=T(x_1)$ for $\mu_1$-a.e. $x_1\in\supp\mu_1$.
We refer the reader to \cite{FR10} and \cite[Chapter~3]{Rif14}
for a discussion of optimal transport on sub-Riemannian manifolds.

\begin{proposition}[Exchange property]\label{prop:exchange}
Let $x=(x_1,\ldots,x_m)$ and
$\bar x=(\bar x_1,\ldots,\bar x_m)$ belong to $\Gamma_\varphi$ and share a barycenter $y$. If we
exchange an arbitrary coordinate, then $y$ remains a barycenter. Namely, let
\[
  x'=(x_1,\ldots,\bar x_i,\ldots,x_m),
  \qquad
  \bar x'=(\bar x_1,\ldots,x_i,\ldots,\bar x_m).
\]
Then $y$ is a barycenter of both $x'$ and $\bar x'$.
\end{proposition}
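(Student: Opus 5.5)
The argument combines the equality defining $\Gamma_\varphi$ with the dual constraint, which holds at every tuple. Since $x,\bar x\in\Gamma_\varphi$ share the barycenter $y$, we have
\[
  \sum_{j=1}^m\varphi_j(x_j)=c(x)=\sum_{j=1}^m d^2(x_j,y),
  \qquad
  \sum_{j=1}^m\varphi_j(\bar x_j)=c(\bar x)=\sum_{j=1}^m d^2(\bar x_j,y).
\]
Adding these gives
\[
  \sum_j\varphi_j(x_j)+\sum_j\varphi_j(\bar x_j)
  =\sum_j d^2(x_j,y)+\sum_j d^2(\bar x_j,y).
\]
The right-hand side is a sum of $2m$ terms. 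Regroup them so that $\bar x_i$ is paired with the other coordinates of $x$, and $x_i$ with the other coordinates of $\bar x$. This gives
\[
  \sum_j d^2(x_j,y)+\sum_j d^2(\bar x_j,y)
  =\Bigl[\sum_{j\ne i} d^2(x_j,y)+d^2(\bar x_i,y)\Bigr]
   +\Bigl[\sum_{j\ne i} d^2(\bar x_j,y)+d^2(x_i,y)\Bigr].
\]
Each bracket is at least the value of the cost at the corresponding tuple, namely $c(x')$ and $c(\bar x')$, by the definition \eqref{eq:barycentric-cost} of $c$ as an infimum over $y$.

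Next we apply the dual constraint $\sum_j\varphi_j(z_j)\le c(z)$, which holds for every tuple $z$, in particular for $z=x'$ and $z=\bar x'$. The multiset of potential values $\{\varphi_j(x_j)\}\cup\{\varphi_j(\bar x_j)\}$ is unchanged by the exchange, so
\[
  \sum_j\varphi_j(x_j)+\sum_j\varphi_j(\bar x_j)
  =\sum_j\varphi_j(x'_j)+\sum_j\varphi_j(\bar x'_j)
  \le c(x')+c(\bar x').
\]
We therefore obtain the chain
\[
  c(x')+c(\bar x')
  \le \Bigl[\sum_{j} d^2(x'_j,y)\Bigr]+\Bigl[\sum_{j} d^2(\bar x'_j,y)\Bigr]
  =\sum_j\varphi_j(x'_j)+\sum_j\varphi_j(\bar x'_j)
  \le c(x')+c(\bar x').
\]
So equality holds throughout. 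Since each of the two bracket inequalities $c(x')\le\sum_j d^2(x'_j,y)$ and $c(\bar x')\le\sum_j d^2(\bar x'_j,y)$ holds separately, and their sum is an equality, each must be an equality. Hence $c(x')=\sum_j d^2(x'_j,y)$, i.e.\ $y$ attains the infimum for $x'$ and is a barycenter of $x'$; the same holds for $\bar x'$. As a by-product, $x'$ and $\bar x'$ both lie in $\Gamma_\varphi$.

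There is no real obstacle here. The only point requiring care is that the dual inequality must be available at the exchanged tuples, which need not be in $\Gamma_\varphi$ a priori. This is exactly what the constraint of \eqref{dp} provides, for all tuples and not just those in the support.
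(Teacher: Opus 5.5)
Your proof is correct and is essentially the paper's argument: both sandwich $c(x')+c(\bar x')$ between two copies of $c(x)+c(\bar x)$, using $y$ as a competitor for the upper bound. The only difference is the lower bound. The paper cites the $c$-cyclic monotonicity of $\Gamma_\varphi$, whereas you derive that two-point inequality directly from the dual constraint and the equality on $\Gamma_\varphi$; this also gives the by-product $x',\bar x'\in\Gamma_\varphi$.
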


\begin{proof}
Since $\Gamma_\varphi$ is $c$-cyclically monotone and $x,\bar x\in\Gamma_\varphi$, we have
\[
  c(x)+c(\bar x)\le c(x')+c(\bar x').
\]
On the other hand, by the definition of the cost and by using $y$ as a
competitor,
\begin{align*}
  c(x')+c(\bar x')
  &\le d^2(\bar x_i,y)+\sum_{j\ne i}d^2(x_j,y)
       +d^2(x_i,y)+\sum_{j\ne i}d^2(\bar x_j,y)\\
  &=c(x)+c(\bar x).
\end{align*}
Thus equality holds, and $y$ is a barycenter of both exchanged tuples.
\end{proof}

\section{Sub-Riemannian barycenters and the multi-marginal Monge problem}\label{sec:structure}
This section studies sub-Riemannian barycenters and gives a sufficient condition for the existence of a solution to \eqref{mp}.
\subsection{The sub-Riemannian barycenter map}
Assume that $(M,\mathcal D,g)$ is complete and that $d^2$
is locally Lipschitz off the diagonal.
Let $\mu_1\in\mathcal{P}^{\mathrm{ac}}_c(M)$ and $\mu_2,\ldots,\mu_m\in\mathcal{P}_c(M)$, and let $\gamma$ be a solution to \eqref{kp}.
Then the associated \emph{barycenter map} is introduced as follows:
\begin{align}\label{bary1}
    b&\colon\supp\gamma\longrightarrow M\\
    b(x_1,\ldots,x_m)&=y,\qquad y\in\operatorname*{argmin}_{z\in M}\sum_{i=1}^m d^2(x_i,z).
\end{align}
By Proposition~\ref{prop:potential-barycenter} and the subsequent discussion, $b$ is well-defined for $\gamma$-a.e. $x=(x_1,\ldots,x_m)$, and $b(x)=T(x_1)$, where $T$ is defined as in \eqref{eq:barycenter-map}.
In the Riemannian setting, two tuples in the support of an optimal plan $\gamma$ that share a barycenter must coincide; see \cite[Lemma~3.5]{KP15}. In particular, the barycenter map
is injective. This property can fail in the sub-Riemannian setting.
We explain this below.

We first formulate the Lagrange multiplier condition for barycenters on
sub-Riemannian manifolds.

\begin{theorem}\label{thm:lagrange-barycenter}
Let $(M,\mathcal D,g)$ be a complete sub-Riemannian manifold, let
$x=(x_1,\ldots,x_m)$, and let $y$ be a barycenter of $x$. For $i=1,\ldots,m$, let $\gamma_i$ be a minimizing geodesic from $x_i$ to $y$ with control $u_i$. Then there exists $(\eta,\lambda_0)\neq 0$, with
$\eta=(\eta_1,\ldots,\eta_m)\in(T_y^*M)^m$ and
$\lambda_0\in\{1,0\}$ such that, for $i=1,\ldots,m$,
\[
  \begin{cases}
  \eta_i^*D_{u_i}E_{x_i}
    =\lambda_0\displaystyle\int_0^1
      \langle u_i,\cdot\rangle\,dt,\\
  E_{x_i}(u_i)=y.
  \end{cases}
\]
Moreover, 
\begin{equation}\label{balance}
  \displaystyle\sum_{i=1}^m\eta_i=0.
\end{equation}
\end{theorem}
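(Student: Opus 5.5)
Since $y$ is a barycenter and each $\gamma_i$ is minimizing from $x_i$ to $y$, the tuple of controls $(u_1,\ldots,u_m)$ minimizes the total energy
\[
  \mathcal J(v_1,\ldots,v_m):=\sum_{i=1}^m \tfrac12\|v_i\|_{L^2}^2
\]
among all tuples $(v_1,\ldots,v_m)\in\mathcal U_{x_1}\times\cdots\times\mathcal U_{x_m}$ with a common endpoint, i.e.\ $E_{x_1}(v_1)=\cdots=E_{x_m}(v_m)$. Indeed, for any such tuple with common endpoint $z$, we have $\tfrac12\|v_i\|^2\ge \tfrac12 d^2(x_i,z)$. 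Summing gives $\mathcal J(v)\ge \tfrac12\sum_i d^2(x_i,z)\ge\tfrac12\sum_i d^2(x_i,y)=\mathcal J(u)$, where the last equality uses that the $u_i$ are minimal controls of constant-speed minimizing geodesics. So $u$ is a constrained minimizer, and the plan is to apply the abstract Lagrange multiplier rule (the same one underlying \cite[Theorem~B.2]{Rif14}) to this enlarged problem.

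To make the constraint manifold-valued in a clean way, I will introduce an extra variable $z\in M$ and consider the smooth map
\[
  \Phi:\mathcal U_{x_1}\times\cdots\times\mathcal U_{x_m}\times M\to M^m,\qquad
  \Phi(v,z)=(E_{x_1}(v_1),\ldots,E_{x_m}(v_m)),
\]
with the constraint $\Phi(v,z)=(z,\ldots,z)$, i.e.\ $\Phi(v,z)\in$ the graph over $z$ of the diagonal. Equivalently, I consider $F(v,z):=(E_{x_1}(v_1),\ldots,E_{x_m}(v_m),z)$ and require it to lie in the closed submanifold $\Delta:=\{(z,\ldots,z,z)\}\subset M^{m+1}$. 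Working in a chart near $y$, this becomes $G(v,z):=(E_{x_i}(v_i)-z)_{i}=0\in(\R^n)^m$.

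The objective $\mathcal J$ (independent of $z$) is minimized at $(u,y)$ subject to $G=0$. The Lagrange multiplier rule for a smooth functional on a Banach space with a finite-dimensional smooth constraint then applies: either $DG$ is not surjective, or it is and a normal multiplier exists. It yields $(\eta,\lambda_0)\ne0$ with $\eta=(\eta_1,\ldots,\eta_m)\in(T_y^*M)^m$, $\lambda_0\in\{0,1\}$ after normalization, such that
\[
  \sum_i \eta_i\bigl(D_{u_i}E_{x_i}w_i - \zeta\bigr)=\lambda_0\sum_i\int_0^1\langle u_i,w_i\rangle\,dt
\]
for all $w_i\in L^2$ and all $\zeta\in T_yM$.

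Separating variables finishes the proof. Taking $\zeta=0$ and all $w_j=0$ for $j\ne i$ gives $\eta_i^*D_{u_i}E_{x_i}=\lambda_0\int_0^1\langle u_i,\cdot\rangle\,dt$ for each $i$. Taking all $w_i=0$ and $\zeta$ arbitrary gives $\sum_i\eta_i(\zeta)=0$, which is \eqref{balance}. The endpoint condition $E_{x_i}(u_i)=y$ holds by construction.

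The main obstacle is justifying the multiplier rule in this infinite-dimensional setting. The approach I would take is to note that the constraint has finite codimension $mn$, so the standard argument goes through: if the image of $DG$ is a proper subspace, any annihilating covector gives an abnormal multiplier ($\lambda_0=0$). Otherwise, apply the implicit function theorem / open mapping argument to the map $(v,z)\mapsto(\mathcal J(v),G(v,z))\in\R\times(\R^n)^m$; if it were a submersion at $(u,y)$ we could decrease $\mathcal J$ while keeping $G=0$, contradicting minimality. Hence its differential has non-full rank, which produces $(\eta,\lambda_0)\ne0$. The scaling $\lambda_0\in\{0,1\}$ is then obtained by dividing by $\lambda_0$ when it is nonzero, with the sign convention chosen to match \cite[Theorem~B.2]{Rif14}.
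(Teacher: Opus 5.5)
Your proposal is correct and follows essentially the same route as the paper's appendix proof. Both add the endpoint variable $z$, write the constraints $E_{x_i}(v_i)=z$ in a chart around $y$, apply the Lagrange multiplier rule of \cite[Theorem~B.2]{Rif14}, and separate the control and $\zeta$-variables to get the individual identities and $\sum_i\eta_i=0$; your explicit check that $(u,y)$ is a constrained minimizer and your sketch of the finite-codimension multiplier rule fill in details the paper only asserts or cites.
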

This is the multi-endpoint analog of the standard Lagrange multiplier rule; see \cite[Theorem~B.2]{Rif14}. For completeness,
a proof is deferred to the appendix.

The above characterization of barycenters does not directly provide
the information needed for our purposes. First, the multipliers need not be
unique. Moreover, constant curves are abnormal as well (except in the Riemannian case), so coordinates
coinciding with the barycenter may contribute nonzero covectors.
For example, in the ideal case, when $\lambda_0=1$, each coordinate $x_i\ne y$
satisfies
$x_i=\exp_y(-\eta_i)$,
whereas coordinates with $x_i=y$ may contribute
nonzero covectors in $\Ann(\mathcal D_y)$. Consequently, the balance
condition \eqref{balance} yields only
\[
  \sum_{i:\,x_i\ne y}\eta_i\in\Ann(\mathcal D_y),
\]
rather than the vanishing of this sum.

In the Riemannian setting, barycenters of tuples in $\supp\gamma$
lie outside the cut locus of each coordinate; see
\cite[Lemma~3.1]{KP15}. Therefore, the covectors $\eta_i$ in \eqref{balance} are uniquely
determined by
\[
  \eta_i=\frac12 d_y[d^2(x_i,\cdot)],
\]
and coordinates with $x_i=y$ contribute $\eta_i=0$.
Together with Proposition~\ref{prop:exchange}, these observations show that
each barycenter uniquely determines the corresponding optimal tuple.
Thus, we first map $x_1$ to its barycenter $y$ using
Proposition~\ref{prop:potential-barycenter}, and then recover
$x_2,\ldots,x_m$ from $y$ by injectivity of the barycenter map.
Composing these maps yields a solution to the Monge problem \eqref{mp}.
However, the argument establishing that barycenters lie outside the
cut loci relies on semiconcavity of the squared distance on the diagonal
and therefore does not directly apply in the sub-Riemannian setting. Fortunately, inspired by the work of Ohta \cite[Section~4]{Ohta12}
and Jiang \cite[Theorem~3.5]{Jia17} on Alexandrov spaces,
we observe that it is unnecessary to prove that the barycenter
lies outside the cut locus of each coordinate (see \cite[Remark~3.11]{Jia17}).
We therefore establish the following sub-Riemannian balance condition.

We first introduce some notation.
For a tuple $x=(x_1,\ldots,x_m)$ with a chosen barycenter $y$,
we call the coordinate $x_j$ \emph{moving} if $x_j\ne y$
and \emph{static} if $x_j=y$.
We can now state our characterization of sub-Riemannian barycenters.
\begin{theorem}\label{thm:barycenter-rigidity}
Let $(M,\mathcal D,g)$ be an ideal sub-Riemannian manifold, and let
$x,\bar x\in\supp\gamma$ have a common barycenter $y$.
Then:
\begin{itemize}
  \item[(1)] If every $x_i$ is moving, then $x_i=\bar x_i$ for every $i$.
  \item[(2)] If at least one coordinate $x_i$ is static, then:
  \begin{itemize}
    \item[(a)] for every static coordinate $x_i$, one has
      $x_i=y=\bar x_i$;
    \item[(b)] for each $i$ such that $x_i,\bar x_i\ne y$, let
      $\gamma_i(t)=\exp_y(t\eta_i)$ and
      $\bar\gamma_i(t)=\exp_y(t\bar\eta_i)$, $t\in[0,1]$,
      be minimizing geodesics from $y$ to $x_i$
      and $\bar x_i$, respectively. Then
      \begin{equation}\label{balcond}
        \eta_i-\bar\eta_i\in\Ann(\mathcal D_y).
      \end{equation}
  \end{itemize}
\end{itemize}
\end{theorem}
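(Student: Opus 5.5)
The plan is to combine the exchange property (Proposition~\ref{prop:exchange}) with a first-variation argument at $y$, in the spirit of Ohta and Jiang. We never try to show that $y$ lies outside the cut loci of the $x_i$.

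\textbf{Exchange and common support.} Fix a coordinate $i$ and let $x'$ be the tuple obtained from $x$ by replacing $x_i$ with $\bar x_i$. By Proposition~\ref{prop:exchange}, $y$ is a barycenter of $x$, $\bar x$, $x'$ and $\bar x'$. So the function
\[
F(z)=\sum_{j\ne i}d^2(x_j,z)
\]
satisfies both
\[
F(z)+d^2(x_i,z)\ge F(y)+d^2(x_i,y)
\quad\text{and}\quad
F(z)+d^2(\bar x_i,z)\ge F(y)+d^2(\bar x_i,y)
\]
for all $z$. In other words, $-d^2(x_i,\cdot)+\mathrm{const}$ and $-d^2(\bar x_i,\cdot)+\mathrm{const}$ are both lower supports of $F$ touching it at $y$. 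The same holds for $\bar F=\sum_{j\ne i}d^2(\bar x_j,\cdot)$.

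\textbf{Differentiating along horizontal curves.} By ideality, $d^2(x_j,\cdot)$ is locally semiconcave near $y$ whenever $x_j\neq y$. Take any horizontal curve $c(s)$ with $c(0)=y$ and $\dot c(0)=v\in\mathcal D_y$. For a moving coordinate, semiconcavity gives one-sided derivative bounds, and the minimizing geodesic $\exp_y(t\eta_j)$ gives the superdifferential element: $d^2(x_j,c(s))\le d^2(x_j,y)-2s\langle\eta_j,v\rangle+O(s^2)$. Since $d^2(y,c(s))\le s^2|v|^2$ for horizontal $c$, a static coordinate contributes only $O(s^2)$ along horizontal directions. Hence $F$ is "horizontally superdifferentiable" at $y$, and so is $F+d^2(x_i,\cdot)$. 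A semiconcave function that attains a minimum at $y$ has, along horizontal directions, a unique horizontal superdifferential which must vanish. From this we want to deduce that the horizontal parts of all the touching covectors agree.

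\textbf{Part (2)(b).} Subtracting the two minimality relations, we get that $d^2(x_i,\cdot)$ and $d^2(\bar x_i,\cdot)$ have horizontal derivatives at $y$ differing by a horizontal derivative of $F$. Running the same comparison with $-F$-type bounds (using $x'$ as well as $x$) pins this down, and yields $\eta_i|_{\mathcal D_y}=\bar\eta_i|_{\mathcal D_y}$, which is \eqref{balcond}. The delicate point is that $d^2(x_i,\cdot)$ need only be semiconcave, not differentiable. So I would argue through supergradients: the minimum condition forces $0$ into a sum of horizontal supergradients, and since the supergradients of the moving terms enter with opposite sign relative to the lower supports, each must reduce to a single horizontal covector.

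\textbf{Part (2)(a).} Suppose $x_i=y$ but $\bar x_i\ne y$. Swap coordinates so that the tuple $\bar x'$ has $y$ in slot $i$ and $x'$ has $\bar x_i$ in slot $i$. Moving $y$ slightly along the horizontal direction $\bar\eta_i^\sharp$ (toward $\bar x_i$) decreases $d^2(\bar x_i,\cdot)$ at first order. The static slot costs only $O(s^2)$, and the other terms are balanced by the previous step. This contradicts minimality unless the horizontal part of $\bar\eta_i$ vanishes. But a nonzero normal covector with zero horizontal part gives a constant curve, contradicting $\bar x_i\ne y$. So $\bar x_i=y$.

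\textbf{Part (1).} When every coordinate is moving, all terms are semiconcave near $y$ and none is singular. The balance condition from Theorem~\ref{thm:lagrange-barycenter} (with $\lambda_0=1$, since ideality excludes abnormal minimizers) then reads $\sum_j\eta_j=0$ exactly, with no annihilator ambiguity. Applying the same argument to $x$ and $x'$ shows $\sum_{j\ne i}\eta_j+\eta_i=0=\sum_{j\ne i}\eta_j+\bar\eta_i$, so $\eta_i=\bar\eta_i$ and hence $x_i=\exp_y(\eta_i)=\bar x_i$.

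The main obstacle is justifying uniqueness of the multipliers at a possibly cut point. Jiang's trick should handle it: for the swapped tuple $x'$, the function $F+d^2(\bar x_i,\cdot)$ minimized at $y$ combined with $F+d^2(x_i,\cdot)$ minimized at $y$ forces $d^2(x_i,\cdot)$ to have a lower support at $y$ (namely $F$-differences), hence be differentiable there by semiconcavity. Proposition~\ref{prop:lower-support} then gives a unique geodesic and a unique $\eta_i$.
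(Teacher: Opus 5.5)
Your proposal is correct. For parts (2)(a) and (2)(b) it is essentially the paper's argument: a first variation of $z\mapsto\sum_j d^2(x_j,z)$ at $y$, using the supergradient $-2\eta_j$ given by a smooth upper support for each moving term, the fact that static terms are $O(s^2)$ along horizontal curves, and Proposition~\ref{prop:exchange} applied to $x$ and the swapped tuple $x'$. The paper varies only along the directions $\dot\gamma_i(0)$ and sums to get $\bigl|\sum_{i\,\mathrm{moving}}u_i(0)\bigr|^2\le 0$; your variation along an arbitrary $v\in\mathcal D_y$ together with $v\mapsto -v$ is equivalent.

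The real difference is part (1). The paper repeats the first variation along arbitrary $v\in T_yM$, which is allowed because every term now has a $C^2$ upper support at $y$. This yields $\sum_j\eta_j=0$ for every choice of minimizing geodesics, so uniqueness of multipliers is never needed and one mechanism handles both cases. You instead use Theorem~\ref{thm:lagrange-barycenter}. Since all coordinates are moving, ideality forces $\lambda_0=1$: otherwise some $\eta_i\neq0$ would make the nonconstant minimizer $\gamma_i$ abnormal. Ideality also makes the normal multiplier of each fixed geodesic unique, so the $\eta_j$, $j\ne i$, agree for $x$ and $x'$. This route needs no upper supports in non-horizontal directions and shows exactly where ideality enters. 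Your alternative via Jiang's trick is also valid here: $\mathrm{const}-F$ is a semiconvex lower support of the semiconcave function $d^2(x_i,\cdot)$, which gives differentiability at $y$ and a unique geodesic by Proposition~\ref{prop:lower-support}. You should state that $\bar x_i\ne y$ before applying the multiplier rule to $x'$. This follows from your (2)(a) argument with $x$ and $\bar x$ interchanged.

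Two points in your write-up need tightening. First, in (2)(b) you cannot subtract the two minimality inequalities, and $d^2(x_i,\cdot)$ need not have a horizontal derivative. What actually works is that the first-order bound holds for every choice of minimizing geodesic. Hence $\sum_{j\,\mathrm{moving}}\eta_j|_{\mathcal D_y}=0$ holds for every selection, and subtracting this identity for $x$ and for $x'$ (with the same geodesics for $j\ne i$) gives $\eta_i-\bar\eta_i\in\Ann(\mathcal D_y)$. Second, and as a consequence, the ``main obstacle'' you raise is not an obstacle in case (2). That is fortunate, because Jiang's trick does not carry over there. A static coordinate puts $d^2(y,\cdot)$ into $F$, and this term fails to be semiconcave at $y$ as soon as $\mathcal D_y\ne T_yM$. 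You then lose the semiconvexity of $-F$ near $y$ that the trick relies on, so no differentiability of $d^2(x_i,\cdot)$ at $y$ follows.
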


\begin{proof}
We first establish a balance condition.
For each moving coordinate $x_i$, choose a minimizing
geodesic $\gamma_i:[0,1]\to M$ from $y$ to $x_i$.
By ideality, this geodesic is normal and can be written as
\[
  \dot\gamma_i(t)=\sum_j u^j_i(t)X_j(\gamma_i(t)),
  \qquad
  \gamma_i(0)=y,
  \qquad
  \gamma_i(1)=x_i,
  \qquad
  \gamma_i(t)=\exp_y(t\eta_i),
\]
where $u_i=(u^1_i,\ldots,u^k_i)$ is its control.
Moreover, for each moving coordinate $x_i$, there is a $C^2$
function $F_i$ defined near $y$ such that
\begin{equation}\label{F}
  d^2(x_i,z)\le F_i(z),
  \qquad
  d^2(x_i,y)=F_i(y),
  \qquad
  dF_i(y)=-2\eta_i.
\end{equation}
For the construction of such smooth upper supports, we refer to
the proof of \cite[Theorem~5.9]{FR10}.

Since $y$ minimizes $z\mapsto\sum_j d^2(x_j,z)$, we have
\begin{equation}\label{eq:barycenter-minimum}
  \begin{aligned}
    0
    &\le \sum_j d^2(x_j,\gamma_i(t))-\sum_j d^2(x_j,y)\\
    &=\sum_{j\,\mathrm{moving}}
      \bigl[d^2(x_j,\gamma_i(t))-d^2(x_j,y)\bigr]
      +\sum_{j\,\mathrm{static}}d^2(x_j,\gamma_i(t)).
  \end{aligned}
\end{equation}
Let $N=\#\{j:x_j=y\}$. Then
\begin{equation}\label{eq:static-cost}
  \sum_{j\,\mathrm{static}}d^2(x_j,\gamma_i(t))
  =Nt^2\int_0^1|u_i(s)|^2\,ds
  =Nt^2\|u_i\|_{L^2}^2.
\end{equation}
For each moving coordinate $x_j$, Taylor expansion of the smooth
upper support $F_j$ in \eqref{F} gives
\begin{equation}\label{eq:moving-direction-estimate}
  d^2(x_j,\xi(t))
  \le d^2(x_j,y)-2t\eta_j(\dot\xi(0))+O(t^2),
\end{equation}
where $\xi:[0,\varepsilon]\to M$ is a smooth curve with
$\xi(0)=y$ and $\dot\xi(0)=v\in T_yM$.
In particular,
\begin{equation}\label{eq:moving-geodesic-estimate}
  d^2(x_j,\gamma_i(t))
  \le d^2(x_j,y)-2t\eta_j(\dot\gamma_i(0))+O(t^2).
\end{equation}
Substituting \eqref{eq:static-cost} and
\eqref{eq:moving-geodesic-estimate} into
\eqref{eq:barycenter-minimum}, we obtain
\[
  0\le
  -2t\sum_{j\,\mathrm{moving}}\eta_j(\dot\gamma_i(0))
  +Nt^2\|u_i\|_{L^2}^2+O(t^2).
\]
Dividing by $t>0$ and letting $t\downarrow0$ yields
\[
  \sum_{j\,\mathrm{moving}}\eta_j(\dot\gamma_i(0))
  =\sum_{j\,\mathrm{moving}}
    \langle u_j(0),u_i(0)\rangle
  \le0.
\]
Summing over the moving coordinates $x_i$, we find
\[
  \sum_{i,j\,\mathrm{moving}}
    \langle u_j(0),u_i(0)\rangle
  =\left|\sum_{i\,\mathrm{moving}}u_i(0)\right|^2
  \le0.
\]
Therefore, we obtain the sub-Riemannian balance condition
\begin{equation}\label{bal}
  \sum_{i\,\mathrm{moving}}u_i(0)=0,
  \qquad
  \left.\sum_{i\,\mathrm{moving}}\eta_i\right|_{\mathcal D_y}=0.
\end{equation}
These identities also hold trivially when all coordinates are static.

We next show that $x_i=y$ if and only if $\bar x_i=y$.
Without loss of generality, suppose that $x_1=y$ but $\bar x_1\ne y$.
By Proposition~\ref{prop:exchange}, the tuple obtained from $x$
by replacing $x_1$ with $\bar x_1$ also has barycenter $y$.
Choose a normal minimizing geodesic from $y$ to $\bar x_1$,
with initial covector $\bar\eta_1$ and control $\bar u_1$.
Comparing \eqref{bal} for the two tuples gives
\[
  \bar u_1(0)+\sum_{\substack{i\ge2\\x_i\ne y}}u_i(0)=0
  \quad\Longrightarrow\quad
  \bar u_1(0)=0,
\]
which contradicts $\bar x_1\ne y$.
Interchanging $x$ and $\bar x$ proves the converse.
For the second claim in (2), comparing \eqref{bal} before and
after exchanging the moving coordinates $x_i$ and $\bar x_i$ gives
\begin{equation}
  \eta_i-\bar\eta_i\in\Ann(\mathcal D_y).
\end{equation}

Suppose now that every $x_i$ is moving.
For any $v\in T_yM$, choose a smooth curve $\xi$ with
$\xi(0)=y$ and $\dot\xi(0)=v$.
By \eqref{eq:moving-direction-estimate} and the definition of a barycenter,
\[
  0\le
  \sum_{j=1}^m d^2(x_j,\xi(t))
  -\sum_{j=1}^m d^2(x_j,y)
  \le -2t\sum_{j=1}^m\eta_j(v)+O(t^2).
\]
Dividing by $t>0$, letting $t\downarrow0$, and replacing $v$
by $-v$, we obtain
\[
  \sum_{j=1}^m\eta_j=0.
\]
Since $x$ and $\bar x$ share the barycenter $y$, the preceding argument shows that every $\bar x_i$ is also moving.
For each $i$, Proposition~\ref{prop:exchange} shows that $y$
is also a barycenter of the exchanged tuple
\[
x'=(x_1,\ldots,x_{i-1},\bar x_i,x_{i+1},\ldots,x_m).
\]
Applying the same argument to this tuple gives
\[
  \sum_{j=1}^m\eta_j=0
  =\sum_{j\ne i}\eta_j+\bar\eta_i.
\]
Thus $\eta_i=\bar\eta_i$, and consequently
\[
  x_i=\exp_y(\eta_i)=\exp_y(\bar\eta_i)=\bar x_i.
\]
This proves (1) and completes the proof.
\end{proof}

The preceding theorem suggests a sufficient condition for the existence
of a Monge solution. Namely,
if static coordinates are excluded, each barycenter uniquely determines
the corresponding optimal tuple.
We therefore impose the following condition, which is condition~(C2)
in \cite{PPV21}. Under this condition, we solve \eqref{mp} assuming absolute continuity only of the first marginal.
\begin{definition}[Condition A]\label{condA}
Let
\begin{equation}\label{setB}
    B:=\left\{y\in M:
    y\text{ is a barycenter of some }
    (x_1,\ldots,x_m)\in\prod_{i=1}^m\supp\mu_i\right\}.
\end{equation}
We say that $\mu_1,\ldots,\mu_m$ satisfy \emph{Condition A} if
\[
  \mu_i(B)=0,\qquad i=1,\ldots,m.
\]
\end{definition}

\begin{theorem}\label{mppp}
Let $(M,\mathcal D,g)$ be an ideal sub-Riemannian manifold.
Assume that $\mu_1\in\Pac(M)$ and $\mu_i\in\Pc(M)$ for $i=2,\ldots,m$. If these marginals satisfy Condition A, then
the Monge problem \eqref{mp} admits a solution.
\end{theorem}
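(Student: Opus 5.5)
The plan is to follow the Riemannian composition strategy described after Theorem~\ref{thm:lagrange-barycenter}: send $x_1$ to its barycenter through the map $T$, then recover the remaining coordinates from the barycenter using Theorem~\ref{thm:barycenter-rigidity}(1). Condition A is what rules out static coordinates.

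First, fix an optimal plan $\gamma$ and a $c$-conjugate dual solution $\varphi$, so that $\supp\gamma\subset\Gamma_\varphi$. By Proposition~\ref{prop:potential-barycenter} and the discussion following it, there is a Borel set $S\subset\supp\gamma$ of full $\gamma$-measure on which every tuple $x$ has a barycenter, and any such barycenter equals $T(x_1)$. In particular $b(x)=T(x_1)$ for $x\in S$.

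Every barycenter $y=b(x)$ with $x\in\supp\gamma\subset\prod_i\supp\mu_i$ belongs to the set $B$ in \eqref{setB}. Condition A gives $\gamma(\{x: x_i\in B\})=\mu_i(B)=0$ for each $i$. Shrinking $S$ by these null sets, we may therefore assume that for $x\in S$ no coordinate lies in $B$. Then $x_i\neq b(x)$ for all $i$, so every coordinate is moving. Before this step I will check that $B$ is measurable: it is the image of the compact set $\prod\supp\mu_i$ under the closed argmin correspondence, hence compact by continuity of $d$ and coercivity.

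Next, I apply Theorem~\ref{thm:barycenter-rigidity}(1). If $x,\bar x\in S$ share the barycenter $y$, then $x=\bar x$, since all coordinates are moving. Hence if $x,\bar x\in S$ satisfy $x_1=\bar x_1$, they share the barycenter $T(x_1)$ and so coincide. Thus $S$ is contained in the graph of a map $x_1\mapsto (x_1,F_2(x_1),\dots,F_m(x_1))$ defined on $\pi_1(S)$, which has full $\mu_1$-measure.

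To obtain measurability, I replace $S$ by a $\sigma$-compact subset $K$ of full measure, using inner regularity. On each compact piece the projection $\pi_1$ is continuous and injective, so its inverse is continuous on the compact image, and $F$ is therefore Borel on a $\sigma$-compact set of full $\mu_1$-measure. The push-forward identities $(F_i)_\sharp\mu_1=\mu_i$ then follow from $\gamma=(\mathrm{id},F)_\sharp\mu_1$. Since the Monge cost of $F$ equals the optimal Kantorovich value, which is a lower bound for \eqref{mp}, the map $F$ solves \eqref{mp}.

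The main obstacle is measurability and the bookkeeping around ``a.e.'': we need a full-measure set on which the barycenter is unique, equal to $T(x_1)$, and not a coordinate. I expect the $\sigma$-compact selection argument above, together with Condition A applied coordinatewise, to handle this.
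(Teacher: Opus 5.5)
Your proposal is correct and follows the paper's proof: Condition A removes static coordinates $\gamma$-a.e., $T(x_1)$ determines the barycenter, and Theorem~\ref{thm:barycenter-rigidity}(1) then determines the whole tuple from that barycenter. Your extra bookkeeping fills in details the paper leaves implicit: the compactness (hence measurability) of $B$, the $\sigma$-compact selection that makes $F$ Borel, and the comparison of the Monge cost with the Kantorovich value.
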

\begin{proof}
Let $\gamma$ be a solution to \eqref{kp}.
By Condition A, $\gamma$-a.e. tuple has no static coordinates.
By Proposition~\ref{prop:potential-barycenter}, its barycenter
is uniquely determined by $x_1$ and is given by $y=T(x_1)$,
where $T$ is defined in \eqref{eq:barycenter-map}.
On the other hand, Theorem~\ref{thm:barycenter-rigidity}
implies that this barycenter uniquely determines the tuple.
Thus, for $\mu_1$-a.e. $x_1$, the remaining coordinates
$x_2,\ldots,x_m$ are uniquely determined on this plan, which
is therefore induced by a map and gives a solution to \eqref{mp}.
\end{proof}

\subsection{Counterexamples}
We have derived the sub-Riemannian balance condition and used it to partially solve \eqref{mp} under the ideality assumption. Two questions naturally arise. First, can the balance condition \eqref{balcond} be strengthened to establish a one-to-one correspondence between barycenters and optimal tuples? 
Second, in the proof of Theorem~\ref{thm:barycenter-rigidity},
ideality ensures that every nonconstant minimizing geodesic is
non-abnormal and that the squared distance is locally semiconcave
off the diagonal. 
The absence of abnormality allows us to construct smooth
upper supports \eqref{F}. 
Can this regularity assumption be weakened, for instance, to local Lipschitz continuity off the diagonal? The following two counterexamples give negative answers to these questions in general.

The first example shows that a barycenter need not determine its tuple.

\begin{theorem}\label{thm:general-counterexample}
Let $(M,\mathcal D,g)$ be an ideal sub-Riemannian manifold with
$\mathcal D\ne TM$. Let $x,o,y$ be three points in $M$ such that $x\ne y$ are joined by a 
minimizing geodesic $\gamma:[-1,1]\to M$, with
\[
 \gamma(-1)=x,\qquad \gamma(0)=o,\qquad \gamma(1)=y,
 \qquad x\notin\Cut(y).
\]
Assume further that $\gamma(t)=\exp_o(-t\eta)$ for some $\eta\in T_o^*M$ and all $t\in[-1,1]$.
Then, for arbitrary fixed nonzero $\alpha\in\Ann(\mathcal D_o)$, there exists a
sufficiently small $\varepsilon>0$ such that the point
$x_\varepsilon=\exp_o(\eta+\varepsilon\alpha)$
is distinct from $x$, and $o$ is the unique barycenter of
$(x_\varepsilon,y,o)$, as well as of $(x,y,o)$.
\end{theorem}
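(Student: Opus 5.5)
The plan is to treat the unperturbed tuple $(x,y,o)$ with the triangle inequality, and to show that for small $\varepsilon$ the perturbation $\varepsilon\alpha\in\Ann(\mathcal D_o)$ changes $f_0$ only by a linear term in the non-horizontal directions. The Ball--Box theorem (Theorem~\ref{thm:ball-box}) shows that $d^2(o,\cdot)$ dominates such a term near $o$. Write $L:=d(o,x)=d(o,y)$, so $d(x,y)=2L$, and set
\[
f_\varepsilon(z):=d^2(x_\varepsilon,z)+d^2(y,z)+d^2(o,z).
\]

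\emph{Step 1 (the unperturbed tuple).} For every $z$ we have $d^2(x,z)+d^2(y,z)\ge \tfrac12 d(x,y)^2=2L^2=d^2(x,o)+d^2(y,o)$, and $d^2(o,z)\ge 0$ with equality only at $z=o$. Hence
\[
f_0(z)-f_0(o)\ge d^2(o,z),
\]
so $o$ is the unique barycenter of $(x,y,o)$. This inequality is the quantitative estimate used in the later steps.

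\emph{Step 2 (regularity near $o$).} Since $x\notin\Cut(y)$ and $o$ is an interior point of the strictly normal minimizing geodesic $\gamma$, I will argue (standard in the ideal case) that $(x,o)$ and $(y,o)$ lie off $\Cut(M)$, and that $o,x$ are not conjugate along $\gamma$. Two consequences follow.
\begin{itemize}
\item $\exp_o$ is a local diffeomorphism near $\eta$. Since $\alpha\neq0$, this gives $x_\varepsilon\ne x$ for small $\varepsilon>0$.
\item $M\times M\setminus\Cut(M)$ is open, so $d^2(x_\varepsilon,\cdot)$ is smooth on a fixed neighbourhood $U$ of $o$, with $C^2$ norms uniformly bounded in $\varepsilon$. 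The minimizing geodesic from $o$ to $x_\varepsilon$ is the nearby one, which by local invertibility of $\exp_o$ has covector exactly $\eta+\varepsilon\alpha$.
\end{itemize}
Consequently $g_\varepsilon:=d^2(x_\varepsilon,\cdot)+d^2(y,\cdot)$ is smooth on $U$. Moreover $h_\varepsilon:=g_\varepsilon-g_0=d^2(x_\varepsilon,\cdot)-d^2(x,\cdot)$ satisfies $d_oh_\varepsilon=-2\varepsilon\alpha$ (up to the sign convention of Proposition~\ref{prop:lower-support}), together with $h_\varepsilon(o)=O(\varepsilon)$ and $\|D^2h_\varepsilon\|_{U}=O(\varepsilon)$.

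\emph{Step 3 (local strict minimality; the main obstacle).} For $z\in U$, Step 1 gives
\[
f_\varepsilon(z)-f_\varepsilon(o)\ge d^2(o,z)+h_\varepsilon(z)-h_\varepsilon(o)\ge d^2(o,z)-2\varepsilon|\alpha(z)|-C\varepsilon|z|^2 .
\]
I work in the privileged coordinates of Theorem~\ref{thm:ball-box} at $o$. Because $\alpha$ annihilates $\mathcal D_o$, the linear form $\alpha(z)$ involves only coordinates of weight $w_i\ge2$, so $|\alpha(z)|\le C\sum_{w_i\ge2}|z^i|$. The Ball--Box theorem gives
\[
d^2(o,z)\ge c\Bigl(\sum_i|z^i|^{1/w_i}\Bigr)^2\ge c'\Bigl(\sum_{w_i\ge2}|z^i|^{2/w_i}+|z|^2\Bigr)\ge c''\Bigl(\sum_{w_i\ge2}|z^i|+|z|^2\Bigr)
\]
on a small neighbourhood, using $2/w_i\le1$ and $|z^i|\le1$ there. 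Hence for $\varepsilon$ small the right-hand side is positive for $z\ne o$, so $o$ is the unique minimizer of $f_\varepsilon$ on $U$. I expect this step to be the delicate one: I must check that the sign and weight bookkeeping of the Ball--Box estimate really absorbs both the linear term $2\varepsilon\alpha(z)$ and the $O(\varepsilon)$ Hessian error. The smallness of $\varepsilon$ is used exactly here.

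\emph{Step 4 (global).} All barycenters of $(x_\varepsilon,y,o)$ lie in a fixed compact set $K$, and $f_\varepsilon\to f_0$ uniformly on $K$ as $\varepsilon\to0$, by continuity of $d$ and $x_\varepsilon\to x$. On $K\setminus U$ we have $f_0-f_0(o)\ge\inf_{K\setminus U}d^2(o,\cdot)=:\delta>0$. Therefore $f_\varepsilon>f_\varepsilon(o)$ on $K\setminus U$ once $\|f_\varepsilon-f_0\|_{K}<\delta/2$. Combined with Step 3, $o$ is the unique barycenter of $(x_\varepsilon,y,o)$, while Step 1 already handles $(x,y,o)$.
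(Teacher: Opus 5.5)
Your proposal is correct and follows essentially the same route as the paper. Both arguments use the triangle-inequality bound $f_0(z)-f_0(o)\ge d^2(o,z)$, smoothness of $d^2$ near $(x,o)$ and $(y,o)$ giving $d_oh_\varepsilon=-2\varepsilon\alpha$, the Ball--Box estimates $|\alpha(z)|+|z|^2\lesssim d^2(o,z)$ to absorb the perturbation near $o$, and a separate far-field step. The differences are cosmetic: you treat $h_\varepsilon$ as a $C^2$-small perturbation where the paper uses a uniform lower Hessian bound coming from $D^2G_0(o)\ge 0$, you handle the far region by compactness and uniform convergence where the paper uses the explicit bound $s^2-\delta(2a+2s)$, and your weight bookkeeping for $|\alpha(z)|\lesssim d^2(o,z)$ is more explicit than the paper's.
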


\begin{proof}
Fix a nonzero $\alpha\in\operatorname{Ann}(\mathcal D_o)$. Set
\[
 a=d(x,o)=d(y,o)=\tfrac12d(x,y).
\]
Since $\gamma$ is a minimizing curve from $x$ to $y$ and $x\notin \Cut(y)$, $\exp_o$ is a diffeomorphism from a neighborhood of $\eta$ onto a neighborhood of $x$. We can therefore choose $\varepsilon$ sufficiently small so that the normal geodesic with initial covector
$\eta+\varepsilon\alpha$ remains minimizing and connects $o$ to $x_\varepsilon=\exp_o(\eta+\varepsilon\alpha)$, which is distinct from $x$. Since
$H(o,\eta+\varepsilon\alpha)=H(o,\eta),$
the length of this geodesic is unchanged, and hence
\begin{equation}\label{eq:ce-equal-length}
 d(x_\varepsilon,o)=d(x,o)=a.
\end{equation}

Define
\begin{equation}\label{de}
  \widetilde F(p)=d^2(x_\varepsilon,p)+d^2(o,p)+d^2(y,p),\qquad
 F(p)=d^2(x,p)+d^2(o,p)+d^2(y,p).
\end{equation}
 
The triangle inequality gives
\begin{align}\label{eq}
 F(p)&\ge \tfrac12\bigl(d(x,p)+d(y,p)\bigr)^2+d^2(o,p)\notag\\
 &\ge \tfrac12d^2(x,y)+d^2(o,p)
   =F(o)+d^2(o,p).
\end{align}
We will prove that $o$ is also the unique minimizer of $\widetilde F$. We consider points near $o$ and points away from $o$ separately. Near $o$, we will see how the sub-Riemannian geometry, through the Ball--Box estimate, gives rise to this pathological phenomenon.

\medskip\noindent\textbf{Step 1. Points near $o$.}
Let $B(o,r):=\{p\in M:d(o,p)<r\}$. By
Theorem~\ref{thm:ball-box}, for sufficiently small $r<1$,
there exists a coordinate chart centered at $o$ in which
\begin{equation}\label{eq:ce-box}
  2|\alpha(p)|\le C_\alpha d^2(o,p),
  \qquad |p|^2\le C_0d^2(o,p),
  \qquad p\in B(o,r).
\end{equation}
Here $\alpha$ is regarded as a linear function in these coordinates and $p=(z_1,\ldots,z_n)$. The constants $C_\alpha,C_0$ are fixed on this neighborhood and independent of
$\varepsilon$ and $p$.

Set
\[
 G_\varepsilon(p)=d^2(x_\varepsilon,p)+d^2(y,p).
\]
Then $(\varepsilon,p)\mapsto G_\varepsilon(p)$ is smooth near $(0,o)$. Since $o$ is the midpoint of the minimizing geodesic from $x$ to $y$, we have $G_0(p)\ge\tfrac12d^2(x,y)=G_0(o)$. Consequently, 
\[
 DG_0(o)=0,\qquad D^2G_0(o)\ge0.
\]
Fix $\kappa>0$ so small that $\kappa C_0/2<1/4$.
By continuity of the Hessian at $(0,o)$, there exist $\varepsilon_0>0$ and
a convex neighborhood $V$ of $o$ such that
\begin{equation}\label{eq:ce-hessian}
 D_z^2G_\varepsilon(z)[v,v]\ge-\kappa|v|^2
 \quad\text{for } |\varepsilon|<\varepsilon_0,\ z\in V,\ v\in\R^n.
\end{equation}
On the other hand, by direct computation,
\[
 D_zG_\varepsilon(o)
 =-2(\eta+\varepsilon\alpha)+2\eta=-2\varepsilon\alpha,
\]
and Taylor's formula yields
\begin{align*}
 G_\varepsilon(p)-G_\varepsilon(o)\ge-2\varepsilon|\alpha(p)|-\tfrac12\kappa|p|^2.
\end{align*}
Combining this with \eqref{de}, we obtain
\begin{align*}
 \widetilde F(p)-\widetilde F(o)
 &=G_\varepsilon(p)-G_\varepsilon(o)+d^2(o,p)\\
 &\ge\left(1-\varepsilon C_\alpha-\tfrac12\kappa C_0\right)d^2(o,p).
\end{align*}
Choose $r>0$ so that $B(o,r)$ lies in  $V$, and then choose
$0<\varepsilon<\varepsilon_0$ so small that $\varepsilon C_\alpha<1/4$.
It follows that
\[
 \widetilde F(p)-\widetilde F(o)\ge\tfrac12d^2(o,p)>0
 \qquad (0<d(o,p)<r).
\]

\medskip\noindent\textbf{Step 2. Points outside $B(o,r)$.}
Set $s=d(o,p)\ge r$ and $\delta=d(x_\varepsilon,x)$. By
\eqref{eq} and the triangle inequality,
\begin{align*}
 \widetilde F(p)-\widetilde F(o)
 &=F(p)-F(o)+d^2(x_\varepsilon,p)-d^2(x,p)\\
 &\ge s^2-|d(x_\varepsilon,p)-d(x,p)|\bigl(d(x_\varepsilon,p)+d(x,p)\bigr)\\
 &\ge s^2-\delta\bigl(d(x_\varepsilon,o)+d(x,o)+2d(o,p)\bigr)\\
 &=s^2-\delta(2a+2s).
\end{align*}
The function $s\mapsto s^2/(2a+2s)$ is strictly increasing for $s>0$.
Consequently, it suffices to choose $\delta$ such that
\[
 \delta<\frac{r^2}{2a+2r}
       =\frac{r^2}{d(x,y)+2r},
\]
which ensures $\widetilde F(p)>\widetilde F(o)$ for every $s\ge r$.
This condition is achieved by further decreasing the value of $\varepsilon$ chosen above, because
$x_\varepsilon\to x$ as $\varepsilon\downarrow 0$.

Combining the estimates above, we can choose $\varepsilon$ sufficiently small so that
$\widetilde F(p)>\widetilde F(o)$ for every $p\ne o$. Hence $o$ is the
unique global barycenter of $(x_\varepsilon,y,o)$, with $x_\varepsilon\ne x$.
\end{proof}

As a basic model of sub-Riemannian geometry, the Heisenberg group plays an important role in our study. We recall its definition. On $\mathbb{R}^3$, let $\mathcal D$ be spanned by
\[
X=\frac{\partial}{\partial x}-\frac{y}{2}\frac{\partial}{\partial z},\qquad Y=\frac{\partial}{\partial y}+\frac{x}{2}\frac{\partial}{\partial z},
\]
and let $g$ be the metric on $\mathcal D$ for which $X,Y$ are orthonormal. Then $\mathbb H=(\mathbb R^3,\mathcal D,g)$ is the Heisenberg group. This structure is ideal. Choose a minimizing geodesic $\gamma$ passing through the origin $O$, with endpoints $x,y$ and midpoint $O$, such that $x\notin \Cut(y)$. By Theorem~\ref{thm:general-counterexample}, there exists $x'$ close to $x$ such that $O$ is the unique barycenter of both $(x,y,O)$ and $(x',y,O)$. The following example shows that local Lipschitz continuity alone is insufficient to derive Theorem~\ref{thm:barycenter-rigidity}.

\begin{example}
With the above choices of $x,x',y,O$, consider $\mathbb H\times\mathbb R$ equipped with the product sub-Riemannian structure. Its distance is given by
\begin{equation}
d^2((q,s),(q',s'))=d_{\mathbb H}^2(q,q')+|s-s'|^2,
\end{equation}
where $d_{\mathbb H}$ denotes the sub-Riemannian distance on $\mathbb H$. The squared distance $d^2$ is locally Lipschitz in local coordinates.  However, it is not locally semiconcave off the diagonal; see the related discussion in \cite[Section~4.2]{BR19} and \cite{MM}. Consider the two distinct tuples $\big((x,1),(y,1),(O,-2)\big)$ and $\big((x',1),(y,1),(O,-2)\big)$. One can check that both have the unique barycenter $(O,0)$. Every coordinate of either tuple is distinct from $(O,0)$. Thus, without semiconcavity, local Lipschitz regularity of the distance off the diagonal does not guarantee injectivity of the barycenter map, even when all coordinates are moving.
\end{example}

\section{Absolute continuity of the Wasserstein barycenter}\label{sec:ac}
We now turn to Wasserstein barycenters, defined as follows.
\begin{definition}[Wasserstein barycenter]
Let $\mu_1,\ldots,\mu_m\in\Pc(M)$.
A probability measure $\nu\in\mathcal P(M)$ is called a
\emph{Wasserstein barycenter} of $\mu_1,\ldots,\mu_m$ if it minimizes
$\sigma\longmapsto\sum_{i=1}^m W_2^2(\mu_i,\sigma)$
over $\sigma\in\mathcal P(M)$, where
\[
  W_2^2(\mu,\sigma)
  :=\inf_{\pi\in\Pi(\mu,\sigma)}
    \int_{M\times M}d^2(x,y)\,d\pi(x,y).
\]
Here $\Pi(\mu,\sigma)$ denotes the set of couplings of
$\mu$ and $\sigma$.
\end{definition}

On an ideal sub-Riemannian manifold, the argument in
\cite[Theorem~3.1]{KP17} also establishes the existence of a Wasserstein barycenter. 
If, in addition, $\mu_1\in\mathcal P_c^{\mathrm{ac}}(M)$, uniqueness follows from the strict convexity of $\sigma\mapsto \sum\limits_i W_2^2(\mu_i,\sigma)$.
Moreover, for any optimal plan $\gamma$ and associated barycenter
map $b$ as in \eqref{bary1}, 
\[
  \nu=b_\sharp\gamma=T_\sharp\mu_1,
\]
where $T$ is the unique optimal transport from $\mu_1$ to $\nu,$ given in 
\eqref{eq:barycenter-map}.

The absolute continuity of Wasserstein barycenters in Riemannian settings, which requires only one absolutely continuous marginal, is proved in \cite[Theorem~5.1]{KP17}. We follow the same general strategy. After fixing $x_i\in\supp\mu_i$ for $i=2,\ldots,m$, we regard the barycenter map as a function of the first coordinate. We seek a locally Lipschitz inverse whose Lipschitz constant is uniform in the fixed coordinates. In the Riemannian setting, the required uniform bounds follow from geometric comparison estimates. In the sub-Riemannian setting, a more delicate local argument is needed. The remaining approximation strategy follows the Riemannian argument.

\begin{theorem}\label{thm:ac-barycenter-draft}
Let $(M,\mathcal D,g)$ be an ideal sub-Riemannian manifold.
Assume that $\mu_i\in\mathcal P_c^{\mathrm{ac}}(M)$ for $i=1,\ldots,m$.
Then the Wasserstein barycenter $\nu$ of
$\mu_1,\ldots,\mu_m$ is absolutely continuous with respect
to $\mathfrak m$.
\end{theorem}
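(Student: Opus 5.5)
We follow the Kim--Pass strategy. By the discussion preceding the theorem, $\nu=b_\sharp\gamma=T_\sharp\mu_1$. It suffices to show that $\nu(A)=0$ for every Borel $A$ with $\m(A)=0$.

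\textbf{Step 1: discard the static coordinates.} Split $\supp\gamma$ into the set $S$ of tuples in which some coordinate is static, and its complement. On $S$, the barycenter $b(x)$ equals some coordinate $x_i$. Hence
\[
  b_\sharp(\gamma|_S)(A)\le\sum_i\gamma\{x:x_i\in A\}=\sum_i\mu_i(A)=0,
\]
because every $\mu_i\ll\m$. This is where absolute continuity of all marginals is used. It remains to treat tuples with all coordinates moving.

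\textbf{Step 2: invert the barycenter map on moving tuples.} On moving tuples, the proof of Theorem~\ref{thm:barycenter-rigidity}(1) gives $\sum_j\eta_j=0$, where $x_j=\exp_y(\eta_j)$. Here $\eta_j=-\frac12 d_y[d^2(x_j,\cdot)]$ is taken in the sense of the smooth upper supports $F_j$ of \eqref{F}. At $\gamma$-a.e.\ point, $y=T(x_1)$ and $y\in\partial^{d^2}\varphi_1(x_1)$, so $y\notin\Cut(x_1)$ a.e. by the sub-Riemannian optimal transport theory cited after \eqref{eq:barycenter-map}. Therefore $\eta_1=-\frac12 d_y[d^2(x_1,\cdot)]$ is genuinely the differential there. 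Fix $x_2,\dots,x_m$. We then want to recover $x_1$ from $y$ via
\[
  x_1=\exp_y\Bigl(-\sum_{j\ge2}\eta_j(y)\Bigr),\qquad \eta_j(y)=-\tfrac12 d_y[d^2(x_j,\cdot)].
\]
This map is locally Lipschitz in $y$ provided the $\eta_j$ are locally Lipschitz in $y$. That holds because $d^2(x_j,\cdot)$ is locally semiconcave away from $x_j$. The delicate point is that semiconcavity gives only one-sided control of the differential, not Lipschitz continuity of it.

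\textbf{Step 3: handle the lack of Lipschitz control by an approximation argument.} As in Kim--Pass, we cover the moving part of $\supp\gamma$ by countably many pieces. On each piece we require: the points $y$ and $x_j$ stay at distance at least $1/k$ from each other; $y$ lies in a fixed coordinate ball; and for each $j\ge2$ the semiconcavity constant of $d^2(x_j,\cdot)$ near $y$ is uniform. This uniformity is available because semiconcavity is locally uniform on compact subsets off the diagonal, by \cite[Theorem~5.9]{FR10}. On such a piece we use the dual potential $\varphi_1$ together with the inequality $\sum_{j\ge2}d^2(x_j,\cdot)\le\sum_j F_j$. Following Kim--Pass, we show that the map $y\mapsto x_1$ restricted to the piece is Lipschitz with a constant independent of the fixed coordinates. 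The key inequality is derived as follows. For two points $(x,y)$ and $(x',y')$ in the same piece, apply the barycenter minimality together with the exchange property (Proposition~\ref{prop:exchange}) and semiconcavity to both tuples. This yields a semiconvexity estimate for the sum $\sum_{j\ge2}d^2(x_j,\cdot)$ at $y$. The estimate controls $|\eta_1(y)-\eta_1(y')|\lesssim|y-y'|$ in coordinates. Composing with $\exp$, which is smooth on $T^*M$, then gives the Lipschitz bound on $y\mapsto x_1$.

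\textbf{Step 4: conclude.} Let $A_k$ be the set of barycenters arising from the $k$-th piece. On $A_k$ we have a Lipschitz left inverse $y\mapsto x_1$. Since Lipschitz maps preserve $\m$-null sets and $\mu_1\ll\m$, we get
\[
  \nu(A\cap A_k)\le\mu_1(\text{image of }A\cap A_k)=0.
\]
Here $\nu(A\cap A_k)=\mu_1(T^{-1}(A\cap A_k))$, and the Lipschitz inverse shows that $T^{-1}(A\cap A_k)$ is contained in a Lipschitz image of a null set. Summing over $k$ and adding Step 1 gives $\nu(A)=0$.

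\textbf{Main obstacle.} The hardest step is Step 3. It requires a Lipschitz bound that is uniform in $x_2,\dots,x_m$ in the absence of curvature comparison. The plan is to replace Riemannian comparison with compactness of the set of distance-separated pairs and uniform local semiconcavity and semiconvexity constants. Semiconvexity off the cut locus comes from \cite[Theorem~6]{BR19}, applied on compact sets of non-cut pairs, with a countable exhaustion of those sets.
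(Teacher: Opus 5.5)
Your Step~1 is fine; it is the paper's \eqref{eq:static}. The gap lies in the passage from Step~3 to Step~4.

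Everything you actually set up is fiberwise. The recovery formula $x_1=\exp_y\bigl(-\sum_{j\ge2}\eta_j(y)\bigr)$ and the ``semiconvexity estimate at $y$'' both concern the single function $g_{\hat x}=\sum_{j\ge2}d^2(x_j,\cdot)$ with $\hat x=(x_2,\dots,x_m)$ frozen. You say yourself that the constant is ``independent of the fixed coordinates''. In Step~4, however, you use a Lipschitz left inverse $y\mapsto x_1$ on the whole set $A_k$ of barycenters of a piece of $\supp\gamma$, and such a piece contains tuples with many different $\hat x$. A fiberwise bound cannot simply be summed. Since $\mu_2$ has no atoms, every fiber $\{\hat x=\mathrm{const}\}$ is $\gamma$-null, and $\nu(A)$ is spread over uncountably many fibers.

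Nor do the tools you name relate different fibers. For tuples $(x_1,\hat x)$ and $(x_1',\hat x')$ with barycenters $y\ne y'$, you would need $|dg_{\hat x}(y)-dg_{\hat x'}(y')|\lesssim|y-y'|$, which compares the differentials of two different functions. Proposition~\ref{prop:exchange} applies only to tuples sharing a barycenter. $c$-cyclical monotonicity with $y,y'$ as competitors gives only the one-sided bound $\langle dg_{\hat x}(y)-dg_{\hat x'}(y'),y-y'\rangle\le 2C|y-y'|^2$. Using \cite[Theorem~6]{BR19} for the pairs $(x_j,y)$ with $j\ge2$ would also be circular: the fact that $y\notin\Cut(x_j)$ for $\gamma$-a.e.\ tuple is obtained only after $\nu\ll\m$ is known.

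The uniformity in $\hat x$ pays off only through the step you omitted. The paper approximates $\mu_2,\dots,\mu_m$ by finitely supported measures $\mu_i^N$, so there are only finitely many fibers. Their images $\mathfrak{bc}^N_{\hat x}(A^N_{\hat x,\ell})$ are pairwise disjoint by Theorem~\ref{thm:barycenter-rigidity}(1). This gives $\m\bigl(\bigcup_{\hat x}\mathfrak a_{\hat x,\ell}(E_k\cap\mathfrak{bc}^N_{\hat x}(A^N_{\hat x,\ell}))\bigr)\le C_\ell\,\m(E_k)$, with $C_\ell$ independent of $\hat x$ and $N$. The proof then passes to the limit using:
\begin{itemize}
\item $\nu^N\rightharpoonup\nu$ \cite[Lemma~5.4]{KP17};
\item open sets $E_k\supset S$ with $\m(E_k)\le 2^{-k}$;
\item weak-$*$ convergence of $\rho^N$ on the open sets $O_{k,\ell}$.
\end{itemize}
The fiberwise bound itself is made rigorous by the sandwich $-g_{\hat x}\le\beta$, combined with \cite[Theorem~A.19]{FF10}. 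Here $\beta$ is a localized $d^2$-transform and hence semiconcave. This is the precise form of your Step~3.

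Alternatively, you could skip the approximation by replacing the tuple-dependent $-g_{\hat x}$ with a common minorant built from all the dual potentials. On $U_\iota$ put $\tilde\psi(z)=\sum_{j\ge2}\sup_{z_j\in K_j}\{\varphi_j(z_j)-d^2(z_j,z)\}$ and $\phi(z)=\inf_{z_1\in K_1}\{d^2(z_1,z)-\varphi_1(z_1)\}$, with each $K_j\subset\supp\mu_j$ compact and far from $U_\iota$. The dual constraint gives $\tilde\psi\le\phi$, with equality at every barycenter of the piece, whatever $\hat x$ is. Since $\tilde\psi$ is semiconvex and $\phi$ is semiconcave, the same theorem together with Proposition~\ref{prop:lower-support} gives a single locally Lipschitz inverse $y\mapsto\exp_y(-\tfrac12 d\phi(y))$ on the whole piece. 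Your proposal, as written, contains neither mechanism.
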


\begin{proof}
\textbf{Step 0. Setup.}
Since all marginals have compact support, there exist
$p_0\in M$ and $R>0$ such that
\[
  \bigcup_{i=1}^m\supp\mu_i\subset\overline{B(p_0,R)}.
\]
The triangle inequality implies that every barycenter of a tuple
in $\prod_{i=1}^m\supp\mu_i$ lies in $\overline{B(p_0,2R)}$.

Fix $\ell\in \mathbb{N}^+$ and $\hat x=(x_2,\ldots,x_m)\in\prod_{i=2}^m\supp\mu_i$. Let $\nu^0$ be the Wasserstein barycenter of
$\mu_1,\delta_{x_2},\ldots,\delta_{x_m}$, and let $T^0$ be the optimal
transport map from $\mu_1$ to $\nu^0$. By Proposition~\ref{prop:potential-barycenter} and the subsequent discussion,
$T^0$ is defined on a set $\Omega^0\subset\supp\mu_1$ of full $\mu_1$-measure.
Define
\[
  A_{\hat x,\ell}
  :=\{x_1\in\Omega^0:\min_{i\in \{1,\ldots,m\}} d(x_i,T^0(x_1))>\frac1\ell\}.
\]
For the fixed tuple $\hat x$, introduce the notation
\begin{equation}\label{eq:want}
    \mathfrak{bc}_{\hat x}(x_1):=T^0(x_1),
    \qquad x_1\in A_{\hat x,\ell}.
\end{equation}
The notation $T^0(x_1)$ emphasizes the optimal transport map, whereas
$\mathfrak{bc}_{\hat x}(x_1)$ emphasizes its interpretation as the barycenter of
$(x_1,x_2,\ldots,x_m)$.
Since all coordinates of these tuples are moving, Theorem~\ref{thm:barycenter-rigidity} implies that $\mathfrak{bc}_{\hat x}$ is injective.
We show that its inverse is locally Lipschitz, with Lipschitz
constants independent of $\hat x$.

\textbf{Step 1. Uniform Lipschitz estimate.}
To this end, choose finitely many relatively compact coordinate
neighborhoods $U_{\iota}$, convex in their coordinates, such that
$\overline{U}_{\iota}\subset B(p_{\iota},1/(4\ell))$ and the $U_{\iota}$ cover
$\overline{B(p_0,2R)}$.
For each $\iota$, set
\[
  A_{\iota,\ell}
  :=\left\{x_1\in\supp\mu_1:
      d(x_1,p_{\iota})>\frac3{4\ell}\right\}.
\]
For $x_1\in A_{\hat x,\ell}$ with $\mathfrak{bc}_{\hat x}(x_1)\in U_{\iota}$,
the triangle inequality gives $x_1\in A_{\iota,\ell}$. This construction allows us to work on the sets $A_{\iota,\ell}$,
which are independent of $\hat x$, and thus obtain estimates
uniform in $\hat x$.
Meanwhile,
\[
  d(x_1,z)>\frac1{2\ell},
  \qquad x_1\in A_{\iota,\ell},\quad z\in U_{\iota}.
\]
Thus, by local semiconcavity of the squared distance on the
relatively compact set $A_{\iota,\ell}\times U_{\iota}$, the functions
$z\mapsto d^2(x_1,z)$ have a common semiconcavity constant
on $U_{\iota}$, independent of $x_1\in A_{\iota,\ell}$.
Moreover, for those indices $\iota$ with
$\mathfrak{bc}_{\hat x}(A_{\hat x,\ell})\cap U_{\iota}\ne\varnothing$,
the triangle inequality gives
\[
  d(x_i,z)>\frac1{2\ell},
  \qquad i=2,\ldots,m,\quad z\in U_{\iota}.
\]
Consequently, the function
\[
  g_{\hat x,\iota,\ell}(z):=\sum_{i=2}^m d^2(x_i,z),
  \qquad z\in U_{\iota},
\]
also has a semiconcavity constant independent of $\hat x$. Since the cover is finite, this constant can also be chosen independently of $\iota$.
Define
\[
  \alpha_{\hat x,\iota,\ell}(x_1)
  :=\inf_{w\in U_{\iota}}
    \{d^2(x_1,w)+g_{\hat x,\iota,\ell}(w)\},
  \qquad x_1\in A_{\iota,\ell},
\]
and
\[
  \beta_{\hat x,\iota,\ell}(z)
  :=\inf_{x_1\in A_{\iota,\ell}}
    \{d^2(x_1,z)-\alpha_{\hat x,\iota,\ell}(x_1)\},
  \qquad z\in U_{\iota}.
\]
As an infimum of semiconcave functions with a common
semiconcavity constant, $\beta_{\hat x,\iota,\ell}$ is semiconcave as well.
We claim that
\[
  g_{\hat x,\iota,\ell}(z)+\beta_{\hat x,\iota,\ell}(z)\ge0
  \qquad\text{for every }z\in U_{\iota},
\]
with equality on $\mathfrak{bc}_{\hat x}(A_{\hat x,\ell})\cap U_{\iota}$.
Indeed, the definition of $\alpha_{\hat x,\iota,\ell}$ gives
\[
  -g_{\hat x,\iota,\ell}(z)
  \le d^2(x_1,z)-\alpha_{\hat x,\iota,\ell}(x_1)
\]
for every $x_1\in A_{\iota,\ell}$ and $z\in U_{\iota}$.
Taking the infimum over $x_1$ proves the inequality.
If $x_1\in A_{\hat x,\ell}$ and $y=\mathfrak{bc}_{\hat x}(x_1)\in U_{\iota}$,
then $x_1\in A_{\iota,\ell}$ and
\[
  \alpha_{\hat x,\iota,\ell}(x_1)
  =d^2(x_1,y)+g_{\hat x,\iota,\ell}(y).
\]
It follows that
\[
  \beta_{\hat x,\iota,\ell}(y)
  \le d^2(x_1,y)-\alpha_{\hat x,\iota,\ell}(x_1)
  =-g_{\hat x,\iota,\ell}(y),
\]
which proves equality.

Because $-g_{\hat x,\iota,\ell}$ is semiconvex and $\beta_{\hat x,\iota,\ell}$ is semiconcave, \cite[Theorem~A.19]{FF10} implies that both functions are
differentiable on $\mathfrak{bc}_{\hat x}(A_{\hat x,\ell})\cap U_{\iota}$,
where $d\beta_{\hat x,\iota,\ell}=-dg_{\hat x,\iota,\ell}$.
Moreover, the same theorem yields these differentials are locally Lipschitz on this set,
with uniform local Lipschitz constants determined by the
common semiconcavity constants of $g_{\hat x,\iota,\ell}$.
For $x_1\in A_{\hat x,\ell}$ with $y=\mathfrak{bc}_{\hat x}(x_1)\in U_{\iota}$,
we also have
\[
  \beta_{\hat x,\iota,\ell}(z)
  \le d^2(x_1,z)-\alpha_{\hat x,\iota,\ell}(x_1),
  \qquad z\in U_{\iota},
\]
with equality at $z=y$.
Proposition~\ref{prop:lower-support} therefore gives
\begin{equation}\label{afun}
    x_1=\exp_y\left(-\frac12d\beta_{\hat x,\iota,\ell}(y)\right)
    =:\mathfrak a_{\hat x,\iota,\ell}(y).
\end{equation}
Thus
\[
  \mathfrak a_{\hat x,\iota,\ell}(\mathfrak{bc}_{\hat x}(x_1))=x_1
  \qquad
  \text{whenever }x_1\in A_{\hat x,\ell}
  \text{ and }\mathfrak{bc}_{\hat x}(x_1)\in U_{\iota}.
\]
These local formulas agree on overlaps, since they all recover $x_1$.
Therefore we can define the inverse
\[
  \mathfrak a_{\hat x,\ell}:\mathfrak{bc}_{\hat x}(A_{\hat x,\ell})
  \longrightarrow A_{\hat x,\ell}.
\]
The representation \eqref{afun} shows that $\mathfrak a_{\hat x,\ell}$ is locally Lipschitz, uniformly in $\hat x$.
Therefore, for every Borel set $E\subset M$,
\begin{equation}\label{eq:slice-volume-bound}
  \mathfrak m\left(\mathfrak a_{\hat x,\ell}
    (E\cap \mathfrak{bc}_{\hat x}(A_{\hat x,\ell}))\right)
  \le C_\ell\mathfrak m
    (E\cap \mathfrak{bc}_{\hat x}(A_{\hat x,\ell})),
\end{equation}
where $C_\ell$ is a uniform constant with respect to $\hat x$.

\textbf{Step 2. Discrete approximation argument.}
We now apply a discrete approximation argument, following
\cite[Theorem~5.1, Lemmas~5.2--5.4]{KP17}.
For $i=2,\ldots,m$, we choose finitely supported probability measures
$\mu_i^N\rightharpoonup\mu_i$ with
$\supp\mu_i^N\subset\supp\mu_i$ (here we use ``$\rightharpoonup$'' to denote weak-$*$ convergence).
Let $\gamma^N$ be an optimal plan for
$\mu_1,\mu_2^N,\ldots,\mu_m^N$, and let $b^N$ be its barycenter map.
Let $T^N$, defined on $\Omega^N\subset\supp\mu_1$, be the optimal transport map
from $\mu_1$ to $\nu^N:=(b^N)_\sharp\gamma^N$.
Here,
we choose $\Omega^N$ as a set of full $\mu_1$-measure so that every $x\in\supp\gamma^N$ with
$x_1\in\Omega^N$ has the unique barycenter $b^N(x)=T^N(x_1)$.
The set $\overline{\Omega}:=\bigcap_{N=1}^\infty \Omega^N$ also has full $\mu_1$-measure.
By \cite[Lemma~5.4]{KP17},
$\nu^N\rightharpoonup\nu.$
Define the joint measure
\[
  \rho^N:=(\mathrm{id},b^N)_\sharp\gamma^N.
\]
After passing to a subsequence, we have $\rho^N\rightharpoonup\rho$,
where the first $m$ marginals of $\rho$ are $\mu_1,\ldots,\mu_m$ and its last marginal is $\nu$.
For $\hat x\in \prod_{i=2}^m\supp\mu_i^N$, define
\[
  A^N_{\hat x,\ell}
  :=\left\{x_1\in\overline{\Omega}:
        (x_1,\hat x)\in\supp\gamma^N,\quad
        \min_{i\in \{1,\ldots,m\}} d(x_i,T^N(x_1))>\frac1\ell\right\}.
\]
Here we allow $A^N_{\hat x,\ell}$ to be empty.
Set
\[
  \mathfrak{bc}^N_{\hat x}(x_1):=b^N(x_1,\hat x)=T^N(x_1),
  \qquad x_1\in A^N_{\hat x,\ell}.
\]
By Theorem~\ref{thm:barycenter-rigidity}(1), since all coordinates are moving
and the barycenter is determined by $x_1$, the tuple $\hat x$ is uniquely determined by $x_1$.
The same theorem shows that, for each $N$, the sets
$\mathfrak{bc}^N_{\hat x}(A^N_{\hat x,\ell})$, $\hat x\in \prod_{i=2}^m\supp\mu_i^N$, are pairwise
disjoint. We use the same notation $\mathfrak a_{\hat x,\ell}$
for the inverse of $\mathfrak{bc}^N_{\hat x}$ on its image.
Repeating the argument of \textbf{Step 1} on each of these sets shows that
$\mathfrak a_{\hat x,\ell}$ is locally Lipschitz, with the same uniform
local Lipschitz constant, independent of $\hat x$ and $N$.

Suppose, for a contradiction, that a Borel set $S\subset M$ satisfies
$\mathfrak m(S)=0$ and $\nu(S)>\delta>0$.
Since every $\mu_i$ is absolutely continuous,
\begin{equation}\label{eq:static}
  \begin{aligned}
   &\rho(\{(x_1,\ldots,x_m,y):y\in S,\min_{i\in \{1,\ldots,m\}} d(y,x_i)=0\})\\
   &\qquad\le\sum_{i=1}^m
         \rho(\{(x_1,\ldots,x_m,y):x_i\in S\})
     =\sum_{i=1}^m\mu_i(S)=0.
  \end{aligned}
\end{equation}
Consequently,
\[
  \rho(\{(x_1,\ldots,x_m,y):y\in S,\ \min_{i\in \{1,\ldots,m\}} d(x_i,y)>0\})
  =\nu(S)>\delta.
\]
We choose open sets $E_k\supset S$ with
$\mathfrak m(E_k)\le2^{-k}$, and define the open sets
\[
  O_{k,\ell}:=\left\{(x_1,\ldots,x_m,y)\in M^m\times E_k:
                     \min_{i\in \{1,\ldots,m\}} d(x_i,y)>\frac1\ell\right\}.
\]
By continuity from below, there exists a fixed $\ell\in\mathbb N$ such that $\rho(O_{k,\ell})>\delta/2$ for every $k$. 

By weak-$*$ convergence, for each $k$, there exists a sufficiently large $N=N(k)$ such that
\begin{align}\label{con}
  \frac\delta3<\rho^{N}(O_{k,\ell})
  &=\gamma^{N}\left(\left\{x:b^N(x)\in E_k,
                   \ \min_i d(x_i,b^N(x))>\frac1\ell\right\}\right)\\
  &\le\mu_1\left(
       \bigcup_{\hat x\in \prod_{i=2}^m\supp\mu_i^N}
       \mathfrak a_{\hat x,\ell}
       (E_k\cap \mathfrak{bc}^N_{\hat x}(A^N_{\hat x,\ell}))\right).
\end{align}
On the other hand, by \eqref{eq:slice-volume-bound},
\begin{align*}
 &\mathfrak m\left(
       \bigcup_{\hat x\in \prod_{i=2}^m\supp\mu_i^N}
       \mathfrak a_{\hat x,\ell}
       (E_k\cap \mathfrak{bc}^N_{\hat x}(A^N_{\hat x,\ell}))\right)\\
 &\qquad\le C_\ell\sum_{\hat x\in \prod_{i=2}^m\supp\mu_i^N}
       \mathfrak m(E_k\cap \mathfrak{bc}^N_{\hat x}(A^N_{\hat x,\ell}))
 \le C_\ell\mathfrak m(E_k)\le C_\ell2^{-k}.
\end{align*}
By the absolute continuity of $\mu_1$, choosing $k$ sufficiently large yields a contradiction to \eqref{con}. This completes the proof.
\end{proof}

The absolute continuity of $\mu_i$, $i\ge2$, is used only in
\eqref{eq:static} to handle static coordinates.
When static coordinates are present, Theorem~\ref{thm:barycenter-rigidity} no longer
guarantees the injectivity of the map $\mathfrak{bc}_{\hat x}$
constructed in \eqref{eq:want}, let alone the existence of a
locally Lipschitz inverse.
However, Theorem~\ref{thm:barycenter-rigidity}(2) still implies
that the preimage of a fixed barycenter is contained in a
lower-dimensional subset of $M$ and therefore has zero
$\mathfrak m$-measure. Due to this observation, we require that, for each marginal,
the singular mass at static points be concentrated on an at most
countable set, which is weaker than both
Condition~A (Definition~\ref{condA}) and the absolute continuity of all marginals.
\begin{definition}[Condition B]\label{condAA}
Let
\begin{equation}
    B:=\left\{y\in M:
    y\text{ is a barycenter of some }
    (x_1,\ldots,x_m)\in\prod_{i=1}^m\supp\mu_i\right\}.
\end{equation}
We say that $\mu_1,\ldots,\mu_m$ satisfy \emph{Condition B}
if there exists an at most countable set $C\subset B$ such that,
for every Borel set $S\subset M$ with $\mathfrak m(S)=0$,
\[
  \mu_i((S\cap B)\setminus C)=0,\qquad i=1,\ldots,m.
\]
\end{definition}

\begin{theorem}\label{54}
Let $(M,\mathcal D,g)$ be an ideal sub-Riemannian manifold, with
$\mu_1\in\mathcal P_c^{\mathrm{ac}}(M)$ and
$\mu_i\in\mathcal P_c(M)$ for $i=2,\ldots,m$, satisfying Condition B.
Then the Wasserstein barycenter $\nu$ of
$\mu_1,\ldots,\mu_m$ is absolutely continuous with respect to
$\mathfrak m$.
\end{theorem}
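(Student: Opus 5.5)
We follow the proof of Theorem~\ref{thm:ac-barycenter-draft} verbatim. The only place where absolute continuity of $\mu_2,\ldots,\mu_m$ entered was the estimate \eqref{eq:static} on static coordinates. Steps~0--2 used only $\mu_1\in\Pac(M)$, ideality, Theorem~\ref{thm:barycenter-rigidity}(1) and the discrete approximation. They therefore carry over unchanged and give the following: if $S$ is Borel with $\m(S)=0$, then
\[
  \rho\bigl(\{(x,y):y\in S,\ \min_i d(x_i,y)>0\}\bigr)=0 .
\]
So it suffices to show that the static part
\[
  \rho\bigl(\{(x,y):y\in S,\ x_i=y\text{ for some }i\}\bigr)
\]
also vanishes. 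Note that $\rho$ is concentrated on tuples with $y$ a barycenter of $x$, since this is a closed condition preserved under weak-$*$ limits. Hence $y\in B$ on $\supp\rho$. The case $i=1$ is immediate: the mass is at most $\mu_1(S)=0$.

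\textbf{Step 1: reduce to $C$.} For $i\ge2$, bound the static mass by $\rho(\{x_i\in S\cap B,\ x_i=y\})$. By Condition~B, $\mu_i((S\cap B)\setminus C)=0$, so only the points $y=c\in C$ remain. Since $C$ is countable, it suffices to prove $\nu(\{c\})=0$ for each single $c\in C$. Equivalently, we must show that
\[
  \rho\bigl(\{(x,c): x\in\supp\rho\text{-fibre over }c\}\bigr)=0 .
\]

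\textbf{Step 2: atoms of $\nu$.} Fix $c$. For $\rho$-a.e.\ point, the barycenter is $y=T(x_1)$, where $T$ is the optimal map from $\mu_1$ to $\nu$ (obtained as in the discussion after Proposition~\ref{prop:potential-barycenter}, using the potential of the limiting plan). Hence
\[
  \nu(\{c\})=\mu_1(T^{-1}(c)).
\]
Now consider two points of $T^{-1}(c)$ lying in the support of an optimal plan with common barycenter $c$. Apply Theorem~\ref{thm:barycenter-rigidity}(2). If the first coordinates are static, then $x_1=c$, a single point, which is $\mu_1$-null. Otherwise, by (2a) all such $x_1$ are moving. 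By (2b), $x_1=\exp_c(\eta_1)$ with $\eta_1\in\eta_1^0+\Ann(\mathcal D_c)$, where $\eta_1^0$ is fixed. In the case where all coordinates are moving, (1) gives $x_1$ uniquely. Thus
\[
  T^{-1}(c)\subset\{c\}\cup\exp_c\bigl(\eta^0_1+\Ann(\mathcal D_c)\bigr).
\]
The right-hand side is the image of an affine space of dimension $n-\dim\mathcal D_c<n$ under the smooth map $\exp_c$. It is therefore $\m$-null, and so $\mu_1$-null.

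\textbf{Main obstacle.} To apply Theorem~\ref{thm:barycenter-rigidity} we need a single optimal plan $\gamma$ for $\mu_1,\ldots,\mu_m$ with $\rho=(\mathrm{id},b)_\sharp\gamma$. The limit $\rho$ is indeed of this form: its $M^m$-marginal is optimal by stability of optimal plans, and $y$ is a barycenter. This lets us invoke the theorem for tuples in $\supp\gamma$.

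A second subtlety is the choice of the fixed covector $\eta^0_1$. Different tuples may use different geodesics; we fix one reference tuple in $\supp\gamma$ over $c$ and compare every other tuple with it via (2b). If the fibre over $c$ is empty, there is nothing to prove.

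Finally, we check that the image under $\exp_c$ is null. Since $\exp_c$ is smooth, hence locally Lipschitz, on $T_c^*M$, it maps sets of dimension less than $n$ to $\m$-null sets. Summing over the countable set $C$ then gives $\nu(S)=0$.
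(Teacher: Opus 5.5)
Your proposal is correct and follows essentially the same route as the paper. The moving part is handled verbatim by the argument of Theorem~\ref{thm:ac-barycenter-draft}, and the static part is split via Condition~B into $(S\cap B)\setminus C$ and the countable set $C$. Atoms at points of $C$ are then excluded because, by Theorem~\ref{thm:barycenter-rigidity}(2), the first coordinates of the fibre of $\supp\rho$ over a point $c$ lie in $\{c\}\cup\exp_c(\eta+\Ann(\mathcal D_c))$, a set of dimension $<n$.

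The differences are cosmetic. You rule out atoms at every $c\in C$, using part (1) when the fibre carries no static tuple. You also pass through $T^{-1}(c)$, which lies in that fibre only up to a $\mu_1$-null set, so your inclusion should be read modulo null sets. The paper instead treats only those $y$ over which some tuple has a static coordinate, and bounds $\rho(\{y'=y\})$ directly.
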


\begin{proof}
We use the notation of the proof of
Theorem~\ref{thm:ac-barycenter-draft}.
Let $\rho$ be the weak-* limit obtained there from $\rho^N$,
whose projection onto the first $m$ coordinates $(\pi_1,\ldots,\pi_m)_\sharp\rho$ is an optimal plan.
Fix $y$ such that $(x,y)\in\supp\rho$ and
$\min\limits_{i\in \{1,\ldots,m\}} d(x_i,y)=0$ for some $x$.
By Theorem~\ref{thm:barycenter-rigidity}(2), there exists
$\eta\in T_y^*M$ such that
\[
  \{x_1:(x_1,\ldots,x_m,y)\in\supp\rho\}
  \subseteq
  \exp_y\bigl(\eta+\Ann(\mathcal D_y)\bigr)\cup\{y\}
\]
(if $x_1=y$ we can choose $\eta=0$).
Since $\exp_y$ is smooth and
\[
  \dim\bigl(\eta+\Ann(\mathcal D_y)\bigr)
  =n-\dim\mathcal D_y<n,
\]
we obtain
\[
  \begin{aligned}
    \nu(\{y\})
    &=\rho(\{(x_1,\ldots,x_m,y'):y'=y\})\\
    &\le\mu_1\!\left(
      \exp_y\bigl(\eta+\Ann(\mathcal D_y)\bigr)\cup\{y\}
      \right)=0.
  \end{aligned}
\]
Writing the countable set $C$ in Definition~\ref{condAA} as $\{y_j\}_j$, it follows that
\[
  \begin{aligned}
    &\rho(\{(x,y):y\in C,\ \min_{i\in \{1,\ldots,m\}} d(x_i,y)=0\})\\
    &\quad=
      \sum_j\rho(\{(x,y):y=y_j,\ \min_{i\in \{1,\ldots,m\}} d(x_i,y)=0\})
      =0.
  \end{aligned}
\]
For any Borel set $S\subset M$ with $\mathfrak m(S)=0$,
we therefore have
\[
  \begin{aligned}
    &\rho(\{(x,y):y\in S,\ \min_{i\in \{1,\ldots,m\}} d(x_i,y)=0\})\\
    &\quad\le
      \rho(\{(x,y):y\in C,\ \min_{i\in \{1,\ldots,m\}} d(x_i,y)=0\})
      +\mu_1(S)\\
    &\qquad\quad+
      \sum_{i=2}^{m}
      \rho(\{(x,y):y\in S\setminus C,\ y=x_i\})
      =0,
  \end{aligned}
\]
where the last sum vanishes by Condition B.
This replaces \eqref{eq:static}; the rest of the proof is unchanged.
\end{proof}

By the absolute continuity established above, the classical
transport theorem yields a Monge representation of the MMOT optimizer.

\begin{theorem}\label{thm:monge-final}
Let $(M,\mathcal D,g)$ be an ideal sub-Riemannian manifold, with
$\mu_1\in\mathcal P_c^{\mathrm{ac}}(M)$ and
$\mu_i\in\mathcal P_c(M)$ for $i=2,\ldots,m$, satisfying Condition B.
Then the Monge problem \eqref{mp} admits a unique solution,
up to $\mu_1$-almost everywhere equality.
\end{theorem}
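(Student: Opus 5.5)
The plan is to follow the classical route through the Wasserstein barycenter: first send $x_1$ to its barycenter $y=T(x_1)$, then recover each $x_i$ from $y$ by a two-marginal optimal map from $\nu$ to $\mu_i$. This second map exists because, by Theorem~\ref{54}, the barycenter $\nu$ is absolutely continuous.

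Let $\gamma$ be any solution to \eqref{kp} and $\nu=b_\sharp\gamma=T_\sharp\mu_1$ the Wasserstein barycenter. By Theorem~\ref{54}, $\nu\in\Pac(M)$; it is compactly supported since barycenters lie in $\overline{B(p_0,2R)}$. We then form $\rho:=(\mathrm{id},b)_\sharp\gamma$ and its marginals $\rho_i:=(\pi_i,\pi_{m+1})_\sharp\rho\in\Pi(\mu_i,\nu)$.

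The first key step is to show that each $\rho_i$ is an optimal coupling for $W_2^2(\mu_i,\nu)$. We do this by the standard chain
\[
\int c\,d\gamma=\sum_i\int d^2(x_i,y)\,d\rho_i\ge\sum_iW_2^2(\mu_i,\nu)\ge\int c\,d\gamma .
\]
The last inequality holds because gluing optimal couplings of $\nu$ with each $\mu_i$ produces a multi-marginal plan whose cost is at most $\sum_iW_2^2(\mu_i,\nu)$. The middle equality with the Wasserstein barycenter value is the standard identification of the \eqref{kp} value with $\min_\sigma\sum_i W_2^2(\mu_i,\sigma)$, as in \cite{KP17}. Hence every inequality is an equality, and each $\rho_i$ is optimal.

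Next, since $\nu$ is absolutely continuous and $M$ is ideal, so that $d^2$ is locally Lipschitz off the diagonal, sub-Riemannian optimal transport theory \cite{FR10} gives a unique optimal map $S_i$ from $\nu$ to $\mu_i$, with $\rho_i=(S_i,\mathrm{id})_\sharp\nu$. Concretely, $S_i(y)=\exp_y(-\frac12 d_y\psi_i)$ at points off the static set, and $S_i(y)=y$ otherwise, exactly as in \eqref{eq:barycenter-map}. Therefore, for $\rho$-a.e.\ $(x,y)$ we have $x_i=S_i(y)$ and $y=T(x_1)$, so $x_i=S_i\circ T(x_1)$ for $\mu_1$-a.e.\ $x_1$. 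Setting $F_i:=S_i\circ T$ gives $\gamma=(\mathrm{id},F_2,\dots,F_m)_\sharp\mu_1$ with $(F_i)_\sharp\mu_1=\mu_i$. This yields a Monge solution, since its cost equals the Kantorovich minimum, which bounds \eqref{mp} from below.

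For uniqueness, let $F$ be any solution to \eqref{mp}. Its induced plan $\gamma_F$ is then optimal for \eqref{kp}, so the argument above applies to $\gamma_F$ as well. The barycenter $\nu$ is unique by strict convexity, since $\mu_1\in\Pac(M)$. The map $T$ is the unique optimal map from $\mu_1$ to $\nu$, and each $S_i$ is the unique optimal map from $\nu$ to $\mu_i$. Hence $F_i=S_i\circ T$ $\mu_1$-a.e., which shows the solution is unique.

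The main obstacle is the optimality of $\rho_i$ together with the consistency of the a.e.\ sets. We need $b(x)=T(x_1)$ to hold $\gamma$-a.e., and we need the optimal maps $S_i$ to be defined on a set of full $\nu$-measure, which requires handling static points $y=x_i$. The latter is covered because the map $S_i$ from \eqref{eq:barycenter-map}-type formulas includes the static case. We also need the null sets to compose correctly: $T^{-1}$ of a $\nu$-null set must be $\mu_1$-null, and this holds since $T_\sharp\mu_1=\nu$.
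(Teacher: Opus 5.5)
Your proposal is correct and follows essentially the same route as the paper: absolute continuity of $\nu=b_\sharp\gamma$ via Theorem~\ref{54}, optimality of the two-marginal projections of $(\mathrm{id},b)_\sharp\gamma$, unique optimal maps $T$ from $\mu_1$ to $\nu$ and $S_i$ from $\nu$ to $\mu_i$ by \cite[Theorem~3.2]{FR10}, the composition $F_i=S_i\circ T$, and uniqueness inherited from the uniqueness of $\nu$ and of these maps. The only difference is that you write out the gluing argument showing that the projected couplings are optimal and that $b_\sharp\gamma$ is the Wasserstein barycenter, which the paper simply asserts.
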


\begin{proof}
Let $\gamma$ be any optimal plan for the multi-marginal problem,
and let $b$ be its barycenter map.
Then 
the plans $(\pi_1,b)_\sharp\gamma$ and $(b,\pi_i)_\sharp\gamma$
are optimal with marginals $(\mu_1,\nu)$ and $(\nu,\mu_i)$,
respectively, for $i=2,\ldots,m$.
Since $\mu_1,\nu\ll\mathfrak m$,
\cite[Theorem~3.2]{FR10} gives unique optimal maps
$T$ from $\mu_1$ to $\nu$ and $T_i$ from $\nu$ to $\mu_i$.
Hence
$\gamma=(\mathrm{id},T_2\circ T,\ldots,T_m\circ T)_\sharp\mu_1$ and $F=(T_2\circ T,\ldots,T_m\circ T)$ solves \eqref{mp}.
Since $\nu$ and these optimal maps are unique, $F$ is unique.
\end{proof}
As a direct corollary, since each coordinate of an optimal tuple can be obtained via optimal transport from the Wasserstein barycenter to the corresponding marginal, \cite[Corollary~38]{BR19} implies that, for $\gamma$-almost every $x=(x_1,\ldots,x_m)$, the corresponding barycenter $y$ lies outside $\Cut(x_i)$ whenever $x_i\neq y$. This extends \cite[Lemma~3.1]{KP15}.

\begin{corollary}
  Under the hypotheses of Theorem~\ref{thm:monge-final}, for $\gamma$-a.e. $x=(x_1,\ldots,x_m)$, the corresponding barycenter $y\notin\Cut(x_i)$ for every $i$ such that $x_i\neq y$.
\end{corollary}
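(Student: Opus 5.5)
The corollary should follow from Theorem~\ref{thm:monge-final} together with the absolute continuity of the barycenter (Theorem~\ref{54}). The idea is to read each coordinate $x_i$ as the image of the barycenter $y$ under an optimal transport map starting from an absolutely continuous measure. Then we can quote the sub-Riemannian result that such a transport avoids the cut locus almost everywhere.

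First, fix an optimal plan $\gamma$ and its barycenter map $b$, and let $\nu=b_\sharp\gamma$. By Theorem~\ref{54}, $\nu\ll\m$. As in the proof of Theorem~\ref{thm:monge-final}, for each $i$ the plan $(b,\pi_i)_\sharp\gamma$ is optimal between $\nu$ and $\mu_i$ for the cost $d^2$. Optimality holds because
\[
\sum_i\int d^2(x_i,b(x))\,d\gamma=\int c\,d\gamma=\sum_i W_2^2(\mu_i,\nu),
\]
while each summand on the left is at least the corresponding $W_2^2$. Since $\nu$ is absolutely continuous, \cite[Theorem~3.2]{FR10} gives a unique optimal map $T_i$ with $(b,\pi_i)_\sharp\gamma=(\mathrm{id},T_i)_\sharp\nu$. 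Hence $x_i=T_i(y)$ for $\gamma$-a.e.\ $x$, where $y=b(x)$.

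Second, apply \cite[Corollary~38]{BR19} to the pair $(\nu,\mu_i)$ on the ideal manifold $M$. It states that for $\nu$-a.e.\ $y$ with $T_i(y)\neq y$, the point $T_i(y)$ is not in $\Cut(y)$; equivalently the geodesic from $y$ to $T_i(y)$ is unique, strictly normal and nonconjugate. The cut relation is symmetric: $d^2$ is smooth near $(y,x_i)$ if and only if it is smooth near $(x_i,y)$, and strict normality and nonconjugacy are preserved by reversing the geodesic. Therefore $y\notin\Cut(x_i)$ for $\nu$-a.e.\ $y$ with $x_i=T_i(y)\ne y$.

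Third, pull this back to $\gamma$. The exceptional set $N_i\subset M$ has $\nu(N_i)=0$, so $\gamma(b^{-1}(N_i))=0$. Taking the union over the finitely many $i=1,\ldots,m$ still gives a $\gamma$-null set, and the conclusion holds outside it.

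The main obstacle is the second step. We need to check that \cite[Corollary~38]{BR19} applies with the source measure $\nu$ absolutely continuous and the target $\mu_i$ arbitrary compactly supported. We also need its conclusion to be phrased as ``$T(y)\notin\Cut(y)$ whenever $T(y)\ne y$'' and not merely as a statement about a.e.\ regularity, and we need the symmetry of $\Cut$ to hold; this symmetry is automatic from the definition via $\Cut(M)$ and smoothness of $d^2$ on its complement. If the cited corollary only yields that $y$ is a point of twice differentiability of the potential, I would instead conclude directly. Semiconvexity of $d^2(x_i,\cdot)$ at $y$ follows from the Kantorovich potential being semiconcave and touching from below, and then the Theorem of \cite[Theorem~6]{BR19} recalled in Section~\ref{sec:sr} converts semiconvexity into $y\notin\Cut(x_i)$.
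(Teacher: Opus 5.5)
Your proposal is correct and follows the paper's argument: the paper likewise writes each coordinate as $x_i=T_i(y)$ using the optimal map from the absolutely continuous barycenter $\nu$ (Theorem~\ref{54}) to $\mu_i$, then invokes \cite[Corollary~38]{BR19}, and your optimality check for $(b,\pi_i)_\sharp\gamma$, the symmetry of $\Cut$ under geodesic reversal, and the pull-back to $\gamma$ simply spell out what the paper leaves implicit. One small correction to your fallback: the second-order lower bound that gives semiconvexity of $d^2(x_i,\cdot)$ at $y$ comes from twice differentiability of the potential at $y$ (Alexandrov, on the moving set, where the potential is locally semiconcave), not from semiconcavity itself, which only bounds second differences from above.
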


\section{Further discussion}\label{further-discussion}
We end this article by discussing two further questions.

\textbf{Topic 1.} For $p>1$, define the $p$-barycentric cost by
\[
c_p(x_1,\ldots,x_m)
:=\inf_{y\in M}\frac1p\sum_{i=1}^m d^p(x_i,y).
\]
Given probability measures $\mu_1,\ldots,\mu_m\in\mathcal P_c(M)$,
the corresponding $p$-multi-marginal Monge problem is
\[
\inf_{\substack{
                (T_i)_\sharp\mu_1=\mu_i,\ i=2,\ldots,m}}
\int_M c_p\bigl(x,T_2(x),\ldots,T_m(x)\bigr)\,d\mu_1(x).
\]

Assume that the sub-Riemannian structure has step $s$.
If $p>s$, the Ball--Box theorem implies that, for every $y\in M$,
the function $x\mapsto d^p(y,x)$ is differentiable at $x=y$,
with zero differential. Thus static coordinates make no contribution to
the balance condition. The same observation was made in the
Heisenberg setting by Pass, Pinamonti, and Vedovato
\cite[Section~4.1]{PPV21}. This suggests that a stronger balance condition may hold,
leading to existence and uniqueness for the $p$-multi-marginal
Monge problem under weaker assumptions.
We hope to obtain such results in a more general
sub-Riemannian setting.

\textbf{Topic 2.} Using multi-marginal optimal transport, Kim and Pass established a generalized Brunn--Minkowski inequality \cite[Theorem~8.1]{KP17}. Han, Liu and Zhu extended these results to RCD spaces and introduced a new synthetic curvature-dimension condition, called the barycenter curvature-dimension (BCD) condition \cite[Definition~6.1]{HLZ24}. Recent work has shown that the BCD condition can detect infinitesimal Hilbertianity. In particular, while non-Riemannian Finsler manifolds may satisfy the CD condition, smooth reversible non-Riemannian Finsler manifolds cannot satisfy the corresponding BCD condition \cite{HL26a,HL26b}. Although genuine sub-Riemannian manifolds generally fail to satisfy the classical CD condition, it would be interesting to establish suitable barycentric inequalities in this setting, such as Jensen-type inequalities and multi-marginal extensions of the two-marginal interpolation inequalities in \cite{BR19}. Such results may provide new insights into sub-Riemannian geometry.
\appendix
\section{Proof of Theorem~\ref{thm:lagrange-barycenter}}
Let $x_1,\ldots,x_m\in M$, and let $y$ be a barycenter of $(x_1,\ldots,x_m)$. For $i=1,\ldots,m$, let $\gamma_i$ be a minimizing geodesic from $x_i$ to $y$, with control $u_i$. Set $u=(u_1,\ldots,u_m)$. Then $(u,y)$ minimizes
\[
  (v,z)\in\Bigl(\prod_{i=1}^m\mathcal U_{x_i}\Bigr)\times M
  \longmapsto \mathcal J(v,z)=\frac12\sum_{i=1}^m\int_0^1|v_i(t)|^2\,dt,
\]
subject to $E_{x_i}(v_i)=z$ for every $i$, where $\mathcal U_{x_i}$ is the domain of the endpoint map $E_{x_i}$.
Choose a coordinate chart $\psi:W\to\R^n$ around $y$ and restrict each $\mathcal U_{x_i}$ to a neighborhood of $u_i$ on which $E_{x_i}$ takes values in $W$. For $z\in W$, define
\[
  \Phi(v,z)=\bigl(\psi(E_{x_1}(v_1))-\psi(z),\ldots,\psi(E_{x_m}(v_m))-\psi(z)\bigr).
\]
Then the constraints $E_{x_i}(v_i)=z$ for every $i$ are equivalent to $\Phi(v,z)=(0,\ldots,0)$.

Applying the Lagrange multiplier rule \cite[Theorem~B.2]{Rif14}, we obtain $\xi=(\xi_1,\ldots,\xi_m)\in((\R^n)^*)^m$ and $\lambda_0\in\{0,1\}$, with $(\xi,\lambda_0)\ne(0,0)$, such that
\[
  \xi^*D_{(u,y)}\Phi=\lambda_0D_{(u,y)}\mathcal J.
\]
Set $\eta_i=(D_y\psi)^*\xi_i\in T_y^*M$. Since $D_y\psi$ is an isomorphism, $(\eta,\lambda_0)\ne(0,0)$. Evaluating the multiplier identity on $(h_1,\ldots,h_m,w)$, where $h_i\in L^2([0,1];\R^k)$ and $w\in T_yM$, yields
\[
  \sum_{i=1}^m\eta_i\bigl(D_{u_i}E_{x_i}(h_i)\bigr)-\sum_{i=1}^m\eta_i(w)
  =\lambda_0\sum_{i=1}^m\int_0^1\langle u_i(t),h_i(t)\rangle\,dt.
\]
Taking $h_i=0$ for every $i$ and using the arbitrariness of $w$, we obtain
\[
  \sum_{i=1}^m\eta_i=0.
\]
Taking $w=0$ and $h_j=0$ for all $j\ne i$ gives
\[
  \eta_i^*D_{u_i}E_{x_i}=\lambda_0\int_0^1\langle u_i,\cdot\rangle\,dt.
\]
Together with $E_{x_i}(u_i)=y$, these are the required identities.

\medskip
\noindent{\scshape \textbf{Acknowledgements.}}
The author would like to thank Bang-Xian Han for introducing him to this topic. He is also grateful to Shin-ichi Ohta and Kenshiro Tashiro for their helpful discussions and valuable advice.
This work was supported by the China Scholarship Council
under Grant No.~202506270114.

\medskip
\noindent{\scshape \textbf{Use of AI declaration.}}
The proof strategies in this paper were developed by the author.
ChatGPT (OpenAI) was used to construct the two counterexamples
in Section~4.2. These counterexamples helped the author reassess
the direction of the proofs. In particular, they led the author
to abandon the attempt to establish the balance condition without
the ideality assumption. ChatGPT also provided the proof of
Theorem~\ref{thm:lagrange-barycenter} and was used for language editing and proof checking.
The author takes full responsibility for the content of this paper.
\phantomsection


\begin{thebibliography}{99}
\bibitem{AC11}
Agueh M, Carlier G.
Barycenters in the Wasserstein space.
SIAM J. Math. Anal., 2011, 43: 904--924.

\bibitem{ABB20}
Agrachev A, Barilari D, Boscain U.
A Comprehensive Introduction to Sub-Riemannian Geometry:
From the Hamiltonian Viewpoint.
Cambridge Studies in Advanced Mathematics, vol.~181.
Cambridge University Press, Cambridge, 2020.

\bibitem{BR19}
Barilari D, Rizzi L.
Sub-Riemannian interpolation inequalities.
Invent. Math., 2019, 215: 977--1038.

\bibitem{FF10}
Fathi A, Figalli A.
Optimal transportation on non-compact manifolds.
Israel J. Math., 2010, 175: 1--59.

\bibitem{FR10}
Figalli A, Rifford L.
Mass transportation on sub-Riemannian manifolds.
Geom. Funct. Anal., 2010, 20: 124--159.

\bibitem{GS98}
Gangbo W, \'{S}wi\k{e}ch A.
Optimal maps for the multidimensional Monge--Kantorovich problem.
Comm. Pure Appl. Math., 1998, 51: 23--45.

\bibitem{HL26a}
Han B-X, Liu D-Y.
Wasserstein barycenter convexity detects Hilbertian geometry.
arXiv:2606.28213, 2026.

\bibitem{HL26b}
Han B-X, Liu D-Y.
On the geometry of Wasserstein barycenter II:
Riemannian rigidity, essential non-branching, and Finsler models.
arXiv:2609.19564, 2026.

\bibitem{HLZ24}
Han B-X, Liu D-Y, Zhu Z-N.
On the geometry of Wasserstein barycenter I.
arXiv:2412.01190, 2024.

\bibitem{Jia17}
Jiang Y.
Absolute continuity of Wasserstein barycenters over Alexandrov spaces.
Canad. J. Math., 2017, 69: 1087--1108.

\bibitem{KP15}
Kim Y-H, Pass B.
Multi-marginal optimal transport on Riemannian manifolds.
Amer. J. Math., 2015, 137: 1045--1060.

\bibitem{KP17}
Kim Y-H, Pass B.
Wasserstein barycenters over Riemannian manifolds.
Adv. Math., 2017, 307: 640--683.

\bibitem{MM}
Montanari A, Morbidelli D.
On the lack of semiconcavity of the subRiemannian distance in a class
of Carnot groups.
J. Math. Anal. Appl., 2016, 444: 1652--1674.

\bibitem{Ohta12}
Ohta S-i.
Barycenters in Alexandrov spaces of curvature bounded below.
Adv. Geom., 2012, 12: 571--587.

\bibitem{Pass14}
Pass B W.
Multi-marginal optimal transport and multi-agent matching problems: uniqueness and structure of solutions.
Discrete Contin. Dyn. Syst., 2014, 34(4): 1623--1639.


\bibitem{PPV21}
Pass B, Pinamonti A, Vedovato M.
Multi-marginal optimal transport on the Heisenberg group.
Methods Appl. Anal., 2021, 28: 61--76.

\bibitem{Rif14}
Rifford L.
Sub-Riemannian Geometry and Optimal Transport.
SpringerBriefs in Mathematics.
Springer, Cham, 2014.

\bibitem{Villani09}
Villani C.
Optimal Transport: Old and New.
Grundlehren der mathematischen Wissenschaften, vol.~338.
Springer, Berlin, 2009.

\end{thebibliography}
\end{document}